\theoremstyle{plain}
\newtheorem{theorem}{Theorem}[section]
\newtheorem{lemma}[theorem]{Lemma}
\newtheorem{proposition}[theorem]{Proposition}
\theoremstyle{definition}
\newtheorem{definition}[theorem]{Definition}
\newtheorem{problem}[theorem]{Problem}
\theoremstyle{remark}
\newtheorem{remark}[theorem]{Remark}
\numberwithin{equation}{section}
\DeclareMathOperator*{\trace}{trace}
\DeclareMathOperator{\diam}{diam}
\DeclareMathOperator{\conv}{Conv}
\DeclareMathOperator{\area}{Area}
\DeclareMathOperator{\Div}{div}
\DeclareMathOperator{\vol}{Vol}
\title{Existence and Regularity of Spheres Minimising the Canham-Helfrich Energy}
\author{Andrea Mondino\thanks{University of Oxford. Mathematical Institute.   (UK). Email: Andrea.Mondino@maths.ox.ac.uk
} \and Christian Scharrer\thanks{University of Warwick. Mathematics Institute. Coventry  (UK). Email: C.Scharrer@warwick.ac.uk} }
\begin{document} 
\maketitle
\begin{abstract}
We prove existence and regularity of minimisers for the Canham-Helfrich energy in the class of weak (possibly branched and bubbled) immersions of the $2$-sphere. This solves (the spherical case) of the minimisation problem proposed by Helfrich in 1973, modelling  lipid bilayer membranes.   On the way to prove the main results we establish the lower semicontinuity of the  Canham-Helfrich energy under weak convergence of  (possibly branched and bubbled) weak immersions.
\end{abstract}

\section{Introduction}
\label{sec:intro}

The basic structural and functional unit of all known living organisms is the cell. The interior material of a cell, the cytoplasm, is enclosed by biological membranes. Most of the cell membranes of living organisms are made of a lipid bilayer, which is a thin polar membrane consisting of two opposite oriented layers of lipid molecules. 

In 1970, in order to explain the biconcave shape of red blood cells, Canham \cite{canham1970minimum} proposed a bending energy density dependent on the squared mean curvature. 

Three years later Helfrich proposed the following curvature elastic energy per unit area of a closed lipid bilayer \cite[Equation (12)]{helfrich1973elastic}
\begin{equation}\label{intro:Helfrich-density}
\frac{1}{2}k_c(H - c_0)^2 + \bar k_c K,
\end{equation} 
where $H$ is the mean curvature, $K$ is the Gauss curvature, $c_0$ is the so-called \emph{spontaneous curvature}, and $k_c, \bar k_c$ are the curvature elastic moduli. The values of the parameters can be measured experimentally (e.g. see \cite{evans1972improved}, \cite{deuling1976red} for   $c_{0}$, and \cite{mutz1990bending}  for $k_{c}$). 
The constant $\bar{k}_c$ is not important for the purpose of this paper as by the Gauss-Bonnet Theorem, the integrated Gauss curvature is a topological constant.


Lipid bilayers are very thin compared to their lateral dimensions, thus are usually modelled as surfaces. Suppose the surface and hence the membrane is represented by a smooth isometric embedding $\vec \Phi: \mathbb S^2 \to \mathbb R^3$ of the $2$-sphere $\mathbb S^2$. We will be concerned with the following integrated version of \eqref{intro:Helfrich-density}
\begin{equation} \label{intro:Helfrich-energy}
	\mathcal H^{c_0}(\vec \Phi) := \int_{ \mathbb S^2 } (H - c_0)^2 \, d\mu = \int_{ \mathbb S^2 } \left(H^2 -2c_0H +1\right) \, d\mu
\end{equation}
where again, $H$ is the mean curvature, $c_0$ is a constant, and $\mu$ is the Radon measure corresponding to the pull back of the Euclidean metric along $\vec \Phi$. The integral in~\eqref{intro:Helfrich-energy} is known as \emph{Canham-Helfrich energy}. It is also referred to as \emph{Canham-Evans-Helfrich} or just \emph{Helfrich} energy. Its most important reduction is the \emph{Willmore energy}, where $c_0 = 0$. Due to its simplicity and fundamental nature, the Willmore energy appears in many areas of science and technology, and has been studied a lot in the past. Its first appearances were found in the works of Poisson \cite{poisson1814memoire} in 1814 and Germain \cite{germain1821recherches} in 1821. It was finally brought onto physical grounds by Kirchhoff \cite{MR1578677} in 1850 as the free energy of an elastic membrane. In the early 20th century, Blaschke considered the Willmore energy in the context of differential geometry and proved its conformal invariance, see for instance \cite{MR0076373}. 

The difference between the Willmore energy and the Canham-Helfrich energy comes from the constant~$c_0$, known as spontaneous curvature. According to Seifert \cite{seifert1997configurations}, it is mainly caused by asymmetry between the two layers of the membrane. Geometrically, the asymmetric area difference between the two layers is given by the total mean curvature, i.e. the integrated mean curvature. 
This is due to the fact that the infinitesimal variation of the area, i.e.  the area difference between two nearby surfaces,  is the total mean curvature.  D\"obereiner et al. \cite{dobereiner1999spontaneous} observed that spontaneous curvature may also arise from differences in the chemical properties of the aqueous solution on the two sides of the lipid bilayer. Many approaches about how to derive the Canham-Helfrich energy density as the energy density of a lipid bilayer have appeared in the literature. We refer to Seifert \cite{seifert1997configurations} for more details.

Our goal is to minimise the Canham-Helfrich energy as well as to study the regularity of minimisers (and more generally of critical points). In the language of the calculus of variations we are concerned with the following Problem \ref{prob:minimisation} stated in Bernard, Wheeler and Wheeler \cite[Introduction, Problem (P1)]{MR3757086}. Given a smooth embedding $\vec \Phi: \mathbb S^2 \to \mathbb R^3$ denote by $\area \vec \Phi=\int_{\mathbb S^2} d\mu$ the area of  the surface $\vec \Phi(\mathbb S^2)$ and by  $\vol \vec \Phi$ the enclosed volume. 

\begin{remark}
	A candidate embedding $\vec \Phi_0$ which achieves the global minimum is called a \emph{minimiser}. In general  it is not unique and, more dramatically, it may not exist: later in the introduction we show that for a suitable choice of parameters the minimum is achieved by a singular immersion and it cannot be achieved by a smooth one. The constraints and the functional $\mathcal H^{c_0}$ are invariant under reparametrisation as well as rigid motions in $\mathbb R^3$. Of course, in order to have a non-empty class of competitors, the constraints have to satisfy the Euclidean isoperimetric  inequality $A_0^3 \geqslant 36 \pi V_0^2$.
\end{remark}

\begin{problem} \label{prob:minimisation}
	Let $c_0, A_0$, and $V_0$ be given constants. Minimise $\mathcal H^{c_0}(\vec \Phi)$ in the class of smooth embeddings $\vec \Phi: \mathbb S^2 \to \mathbb R^3$ subject to the constraints
	\begin{equation} \label{intro:constraints}
		\area \vec \Phi = A_0 \quad \text{and} \quad \vol \vec \Phi = V_0.
	\end{equation}
	That is, find an embedding $\vec \Phi_0 : \mathbb S^2 \to \mathbb R^3$ such that $\area \vec \Phi_0 = A_0$, $\vol \vec \Phi_0 = V_0$, and
	\begin{equation*}
		\mathcal H^{c_0} (\vec \Phi_0) \leqslant \mathcal H^{c_0}(\vec \Phi)
	\end{equation*}
	for any other smooth embedding $\vec \Phi: \mathbb S^2 \to \mathbb R^3$ satisfying the constraints \eqref{intro:constraints}.
\end{problem}

Problem \ref{prob:minimisation} is the classical formulation suggested in \cite{helfrich1973elastic} and \cite{deuling1976red}. According to Bernard, Wheeler and Wheeler \cite{MR3757086}, many issues for the Canham-Helfrich energy, including Problem~\ref{prob:minimisation}, remain open and form important questions that future research should address. A similar problem in the $2$-dimensional case (i.e. closed curves in the Euclidean plane) was formulated and solved by Bellettini, Dal Maso, and Paolini \cite{MR1233638} by a relaxation procedure.

While there was essentially no work on the variational theory of the Willmore energy after Blaschke's seminal work, in 1965 Willmore \cite {MR0202066} reintroduced this Lagrangian which is now named after him. He showed that the round sphere is a minimiser of Problem \ref{prob:minimisation} in the special case $c_0 = 0$ without constraints \eqref{intro:constraints}, see \cite{MR686105}. Simon \cite{MR857667} proved existence of higher genus minimisers for the Willmore energy (see also Kusner \cite{MR1417949} and Bauer-Kuwert \cite{MR1941840}), using the so-called \emph{ambient approach}, i.e. convergence of surfaces is considered in the measure-theoretic sense. Rivi\`ere \cite{MR2430975, MR3276154} proved the analogous result with the so called \emph{parametric approach}, i.e. based on PDE theory and functional analysis as opposed to geometric measure theory. In the present paper we shall adapt the parametric approach. The case $c_0 = 0$ with constraints \eqref{intro:constraints} was solved by Schygulla \cite{MR2928137} using the ambient approach, and generalised to higher genus surfaces by Keller, the first author, and Rivi\`ere \cite{MR3176354} using the parametric approach. 
 
From the mathematical point of view, the spontaneous curvature $c_0$ causes a couple of differences between the Willmore energy and the Canham-Helfrich energy. Most obviously, the Canham-Helfrich energy cannot be bounded below by a strictly positive constant, whereas the Willmore energy is bounded below by $4\pi$, see \eqref{pre:lower_bound_Willmore}. Secondly, while the Willmore functional is invariant under conformal transformations, the Canham-Helfrich energy is not conformally invariant. We will be concerned with yet another property that fails for the Canham-Helfrich energy due to non-negative spontaneous curvature. Namely lower semi-continuity with respect to varifold convergence: while it is well known that the Willmore functional is lower semi-continuous under varifold convergence, the Canham-Helfrich energy in general is not. Indeed, Gro{\ss}e-Brauckmann \cite[\emph{Remark} (ii) on page~550]{MR1248112} constructed a sequence of non-compact infinite genus surfaces $\Sigma_1, \Sigma_2, \ldots$ with constant mean curvature equal to $1$ which converges in the varifold sense to a double plane $\Sigma_\infty$. Hence, the mean curvature $H_\infty$ of the limit $\Sigma_\infty$ is zero and
\begin{equation*}
0 = \int_{\Sigma_k} (H_k - 1)^2\eta \,d\mathscr H^2 < 2 \int_{\Sigma_\infty} (H_\infty-1)^2\eta \,d\mathscr H^2 = 2 \int_{\Sigma_\infty} \eta\,d\mathscr H^2   
\end{equation*} 
for any continuous non-negative, non-zero function $\eta$ on $\mathbb R^3$ of compact support, where $\mathscr H^2$ is the $2$-dimensional Hausdorff measure. Hence, the general Canham-Helfrich energy is not lower semi-continuous under varifold convergence. However, in order to solve Problem \ref{prob:minimisation} by the so-called \emph{direct method of calculus of variations}, lower semi continuity is required. According to R\"oger \cite{MR2524083}, it was an open question under which conditions/in which natural weak topology on the space of immersions one obtains lower semi continuity of the Canham-Helfrich energy. This is presumably the reason why Problem \ref{prob:minimisation} was only partially solved for non-zero spontaneous curvature $c_0$. The axisymmetric case was solved in 2013 by Choksi and Veneroni \cite {MR3116014}, who proved the existence of a minimiser of the Canham-Helfrich energy  among a suitable class of axisymmetric (possibly singular) surfaces under  fixed surface area and  enclosed volume constraints.  Five years later, Dalphin \cite{MR3827803} showed existence of minimisers in a class of $C^{1,1}$ surfaces whose principal curvatures are bounded by a given constant $1/\varepsilon$. Though, in his setting, it is still unclear how to get compactness and lower semi-continuity as $\varepsilon$ tends to zero.

As already alluded to, we tackle Problem \ref{prob:minimisation} by the direct method of calculus of variations. The issue is of course to find a suitable  class $\mathcal F_{A_0, V_0}$ of admissible maps (endowed with a suitable topology)  having area $A_0$ and enclosed volume $V_0$ such that the Canham-Helfrich energy is lower semi-continuous and has (pre-)compact sub-levels.
%
A natural choice is the class of weak (Sobolev) immersions in $W^{2,2}(\mathbb S^2, \mathbb R^3)$, already employed in the context of the Willmore energy for instance by Rivi\`ere \cite {MR3276154} or Kuwert and Li \cite{MR2928715}. We will use the space of  bubble trees of weak possibly branched immersions. It will shortly become clear why we have to allow branched points and multiple bubbles.

In the following, we denote by $ \boldsymbol \cdot$ (resp. $\times$) the Euclidean scalar (resp. vector) product on~$\mathbb R^{3}$.
\begin{definition} \label{def:weak_immersion}
	A map $\vec \Phi: \mathbb S^2 \to \mathbb R^3$ is called \emph{weak (possibly branched) immersion with finite total curvature} if  $\vec \Phi \in W^{1, \infty}(\mathbb S^2, \mathbb R^3)$ and  the following holds:
	\begin{enumerate}
	\item   There exists $C>1$ such that, for a.e. $p\in \mathbb S^{2}$, 
	\begin{equation} \label{intro:non-deg} 
	C^{-1} |d \vec \Phi|^{2}(p)\leq |d \vec \Phi \times d \vec \Phi|(p) \leq C |d \vec \Phi|^{2}(p),
	\end{equation} 
	where the norms are taken with respect to the standard metric on $\mathbb S^{2}$ and with respect to the Euclidean metric of $\mathbb R^{3}$, and where  $d \vec \Phi \times d \vec \Phi$ is the tensor given in local coordinates on $\mathbb S^{2}$ by 
	$$
	d \vec \Phi \times d \vec \Phi:= 2 \partial_{x^{1}} \vec \Phi \times  \partial_{x^{2}} \vec \Phi  \; dx^{1}\wedge dx^{2} \in   \wedge^{2} T^{*} \mathbb S^{2} \otimes \mathbb R^{3};
	$$ 
	\item  There exist a positive integer $N$ and finitely many points $b_1, \ldots, b_N \in \mathbb S^2$ such that $\log |d \vec \Phi|\in L^{\infty}_{loc} (\mathbb S^2 \setminus \{b_1, \cdots, b_N\})$;
        \item   The Gauss map $\vec n$, defined by 
	\begin{equation*}
	\vec n := \frac{\partial_{x^1} \vec \Phi \times \partial_{x^2} \vec \Phi}{|\partial_{x^1} \vec \Phi \times \partial_{x^2} \vec \Phi|}
	\end{equation*}
	in any local chart $x$ of $\mathbb S^2$, satisfies 
	\begin{equation*}
	\vec n \in W^{1,2}(\mathbb S^2, \mathbb R^3).
	\end{equation*}
	\end{enumerate}
	The space of weak (possibly branched) immersions with finite total curvature is denoted by $\mathcal F$.		
\end{definition}

Since by assumption $\vec \Phi$ is a Lipschitz map, it induces an $L^\infty$-metric $g$ given  by
\begin{equation*}
	g(X,Y) = d\vec \Phi(X) \boldsymbol \cdot d\vec \Phi(Y)
\end{equation*}
for elements $X,Y$ of the tangent bundle $T\mathbb S^2$. In the usual way (see for instance \cite[1.2]{MR1688256}), the $L^\infty$-metric $g$ induces a Radon measure $\mu$ on $\mathbb S^2$ which is mutually absolutely continuous to the $2$-dimensional Hausdorff measure on $\mathbb S^2$. 

Using M\"uller-Sv\v{e}r\'ak theory of weak isothermic charts \cite{MR1366547}  and H\'elein's moving frame technique \cite{MR1913803} one can prove the following proposition (see for instance \cite{MR3524220})
\begin{proposition}
\label{pr-I.1}
Let $\vec{\Phi}\in \mathcal F$ be a  weak (possibly branched) immersion of $\mathbb S^2$ into $\mathbb R^{3}$. Then there exists a bilipschitz homeomorphism $\Psi$ of $\mathbb S^2$
such that $\vec{\Phi}\circ\Psi$ is weakly conformal: it satisfies almost everywhere on $\mathbb S^2$
$$
\left\{
\begin{array}{l}
 |\partial_{x^{1}}(\vec{\Phi}\circ\Psi)|^{2}=|\partial_{x^{2}}(\vec{\Phi}\circ\Psi)|^2\quad\\
 \partial_{x^{1}}(\vec{\Phi}\circ\Psi)  \boldsymbol \cdot \partial_{x^{2}}(\vec{\Phi}\circ\Psi)=0,
\end{array}
\right.
$$
where $x$ is a local arbitrary conformal chart on $\mathbb S^2$ for the standard metric. Moreover $\vec{\Phi}\circ\Psi$ is in $W^{1,\infty}(\mathbb S^2, \mathbb R^{3}) \cap W^{2,2}_{loc} (\mathbb S^2 \setminus \{b_{1},\ldots, b_{N} \},  \mathbb R^{3} )$.
\end{proposition}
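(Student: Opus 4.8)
The plan is to produce the reparametrisation in three stages: first construct local weakly conformal charts away from the branch points, then build suitable charts across each $b_i$, and finally glue these into a global conformal structure on $\mathbb S^2$ which, by uniformisation, must coincide with the standard one up to a bilipschitz homeomorphism $\Psi$; the asserted regularity of $\vec\Phi\circ\Psi$ would then follow from a Liouville-type equation for the conformal factor.

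First I would work on a small coordinate disc $D$ around a point of $\mathbb S^2\setminus\{b_1,\dots,b_N\}$, where by Definition~\ref{def:weak_immersion}(2) one has $\log|d\vec\Phi|\in L^\infty(D)$ and by (3) the Gauss map $\vec n\in W^{1,2}(D)$. Following H\'elein's moving frame method \cite{MR1913803}, one minimises the Dirichlet energy among orthonormal frames $(\vec e_1,\vec e_2)$ of the plane bundle $\vec n^\perp$ with fixed boundary trace; the Euler--Lagrange system for the optimal (Coulomb) frame has a Jacobian right-hand side, hence lies in the Wente class, so $\vec e_1,\vec e_2$ are continuous and in $W^{1,2}$. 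Pairing $\vec e_1+i\vec e_2$ with $\partial_z\vec\Phi$ and using \eqref{intro:non-deg} produces a bilipschitz chart change $\varphi$ after which the two coordinate tangent vectors have equal length and are orthogonal, i.e. $\vec\Phi$ is weakly conformal with bounded conformal factor. This is precisely the M\"uller--\v{S}ver\'ak \cite{MR1366547} / H\'elein theory of weak isothermic coordinates; see also \cite{MR3524220}.

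Next I would treat a punctured disc around each branch point $b_i$. Combining the finite total curvature hypothesis with $\varepsilon$-regularity for the Gauss map and the point-removability results of M\"uller--\v{S}ver\'ak, one obtains a conformal coordinate $z$ centred at $b_i$ in which $|\partial_z(\vec\Phi\circ\varphi_i)|=e^{\lambda_i}\,|z|^{\theta_i-1}$ for some integer $\theta_i\geqslant1$ (the multiplicity of the branch point) and $\lambda_i\in L^\infty$; in particular $\vec\Phi\circ\varphi_i$ is still $W^{1,\infty}$ and weakly conformal on the punctured disc. The transition maps between all these charts are orientation-preserving, bilipschitz, and weakly conformal, hence conformal diffeomorphisms on the overlaps, so they equip $\mathbb S^2$ with the structure of a closed genus-zero Riemann surface $\Sigma$. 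By the uniformisation theorem $\Sigma$ is biholomorphic to the standard $\mathbb S^2$; reading this biholomorphism through the charts gives a homeomorphism $\Psi:\mathbb S^2\to\mathbb S^2$, which is bilipschitz since in each chart it is a composition of the bilipschitz maps $\varphi,\varphi_i$ with a smooth biholomorphism and since, by \eqref{intro:non-deg} and Definition~\ref{def:weak_immersion}(2), the metric induced by $\vec\Phi$ is two-sidedly comparable to the flat one away from finitely many points. By construction $\vec\Phi\circ\Psi$ is weakly conformal on $\mathbb S^2$, and $\vec\Phi\circ\Psi\in W^{1,\infty}$ because $\Psi$ is bilipschitz. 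Finally, on $\mathbb S^2\setminus\{b_1,\dots,b_N\}$ the conformal factor $\lambda=\log|\partial_{x^1}(\vec\Phi\circ\Psi)|$ solves a Liouville equation $-\Delta\lambda=K\,e^{2\lambda}$ whose right-hand side (Gauss curvature times area density) is integrable thanks to $\vec n\in W^{1,2}$; Wente's estimate yields $\lambda\in L^\infty_{loc}\cap W^{1,2}_{loc}$ there, and then the second fundamental form, being in conformal coordinates $e^{-2\lambda}$ times the normal component of the Hessian of $\vec\Phi\circ\Psi$, lies in $L^2_{loc}$, whence $\vec\Phi\circ\Psi\in W^{2,2}_{loc}(\mathbb S^2\setminus\{b_1,\dots,b_N\},\mathbb R^3)$.

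The main obstacle I expect is the analytic core of the first two steps together with the uniform control needed for the gluing: because $g$ is merely an $L^\infty$-metric which degenerates at the $b_i$, one cannot invoke classical uniformisation directly but must instead produce the local weakly conformal charts with two-sided bounds via the quantitative Wente and M\"uller--\v{S}ver\'ak estimates and the $\varepsilon$-regularity of the Gauss map, and then verify that the resulting conformal atlas is compatible and that the uniformising map does not distort lengths uncontrollably near the branch points.
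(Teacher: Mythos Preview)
Your proposal is correct and follows precisely the route the paper indicates: the paper does not give a detailed proof of this proposition but states that it follows from the M\"uller--\v{S}ver\'ak theory of weak isothermic charts together with H\'elein's moving frame technique, referring to \cite{MR3524220} for details. Your three-stage outline (local Coulomb-frame charts away from the $b_i$, M\"uller--\v{S}ver\'ak point removability at the branch points, and gluing via uniformisation) is exactly the argument carried out in those references, so there is nothing to add.
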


\medskip

\begin{remark}
\label{rm-I.1}
In view of  Proposition~\ref{pr-I.1}, a careful reader could wonder why we do not work  with \emph{conformal} $W^{2,2}$ weak, possibly branched, immersions only and why we
do not impose for the membership in ${\mathcal F}$, $\vec{\Phi}$ to be conformal from the beginning. The reason why it is technically convenient not to impose conformality from the beginning is to allow general perturbations in the variational problem, which do not have to respect infinitesimally the conformal condition.  
\end{remark} 

The reason why we chose the class $\mathcal F$ as above is the following theorem of the first author and Rivi\`ere \cite[Theorem 1.5]{MR3276119} (see also \cite{ChenLi}). 

\begin{theorem} \label{thm:compactness}
	Suppose $\vec\Phi_1, \vec \Phi_2,\ldots$ is a sequence in $\mathcal F$ of conformal  weak (possibly branched) immersions  such that
	\begin{equation*}
		\limsup_{k\to\infty}\int_{\mathbb S^2}1+|d \vec n_{k}|^2\,d\mu_{k} < \infty, \qquad \liminf_{k\to\infty} \diam \vec\Phi_k[\mathbb S^2]>0
	\end{equation*}
	where $\vec n_{k}$ are the Gauss maps, $\mu_{k}$ are the corresponding Radon measures, and $\diam \vec\Phi_k[\mathbb S^2] := \sup_{a, b \in \mathbb S^2}|\vec \Phi(a) - \vec \Phi (b)|$.
	
	Then, after passing to a subsequence, there exist a family $\Psi_k$ of bilipschitz homeomorphisms of $\mathbb S^2$, a positive integer $N$, sequences $f_k^1,\ldots,f_k^N$ of positive conformal diffeomorphisms of $\mathbb S^2$, $\vec \xi_\infty^1,\ldots,\vec \xi_\infty^N \in \mathcal F$, non-negative integers $N_1,\ldots,N_N$, and finitely many points on the sphere 
	\begin{equation*}
	\{b^{i,j}:j=1,\ldots,N_i,\,i=1,\ldots, N\}\subset \mathbb S^2
	\end{equation*}	
	such that
	\begin{equation} \label{intro:continuous_convergence}
		\vec \Phi_k \circ \Psi_k \to \vec f_\infty \qquad \text{as } k \to \infty \text{ strongly in } C^0(\mathbb S^2, \mathbb R^3)
	\end{equation}
	for some $\vec f_\infty \in W^{1, \infty}(\mathbb S^2, \mathbb R^3)$ and
	\begin{equation*} 
		\vec \Phi_k \circ f^i_k \rightharpoonup \vec \xi_ \infty^i\qquad \text{as }k\to \infty \text{ weakly in } W^{2,2}_{\mathrm{loc}}(\mathbb S^2\setminus\{b^{i,1},\ldots,b^{i,N_i}\},\mathbb R^3)
	\end{equation*}
	for $i = 1,\ldots,N$. Moreover,
	\begin{equation*} 
		\sum_{i = 1}^N \int_ {\mathbb S^2}1\, d\mu_ {\vec \xi_\infty^i} = \lim_{k \to \infty} \int_{\mathbb S^2} 1 \, d\mu_{k}. 
	\end{equation*} 
\end{theorem}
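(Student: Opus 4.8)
The plan is to combine the $\varepsilon$-regularity theory for conformal weak branched immersions of finite total curvature (H\'elein's moving frames, M\"uller--Sv\v{e}r\'ak, Rivi\`ere's conservation laws) with a Sacks--Uhlenbeck / Parker-type bubble-tree extraction. I would first unwind the a priori information in the hypotheses: since $\Lambda := \limsup_k \int_{\mathbb S^2}(1+|d\vec n_k|^2)\,d\mu_k < \infty$ already bounds $\operatorname{Area}(\vec\Phi_k)=\int_{\mathbb S^2}1\,d\mu_k$, the Gauss--Bonnet theorem bounds the Willmore energies $\int_{\mathbb S^2}H_k^2\,d\mu_k$, and then Simon's monotonicity/diameter estimate, roughly of the form $\operatorname{diam}\vec\Phi_k[\mathbb S^2]\leq C\,(\operatorname{Area}(\vec\Phi_k)\,(1+\int_{\mathbb S^2}H_k^2\,d\mu_k))^{1/2}$, bounds the diameters from above as well. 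Together with the assumed lower bound on the diameters I may pass to a subsequence along which $\operatorname{Area}(\vec\Phi_k)\to A_\infty$, $\operatorname{diam}\vec\Phi_k[\mathbb S^2]\to d_\infty\in(0,\infty)$, and the image measures $(\vec\Phi_k)_\#\mu_k$ converge weakly-$*$ in $\mathbb R^3$. Throughout I would use that $\int_{\mathbb S^2}|d\vec n|^2\,d\mu$ is invariant under arbitrary conformal reparametrisations and $\operatorname{Area}$ under orientation-preserving M\"obius reparametrisations of $\mathbb S^2$.

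Next I would isolate the concentration set. Regard $\sigma_k := |d\vec n_k|^2\,\mu_k$ as Radon measures of bounded mass on the \emph{domain} $\mathbb S^2$, extract a weak-$*$ limit $\sigma$, and set $S := \{a\in\mathbb S^2 : \sigma(\{a\})\geq\varepsilon_0\}$, where $\varepsilon_0>0$ is the threshold from the key $\varepsilon$-regularity lemma: if $\vec\Phi\in\mathcal F$ is conformal on a disc $D$ with $\int_D|d\vec n|^2\,d\mu<\varepsilon_0$, then writing $e^{\lambda}=|\partial_{x^1}\vec\Phi|$ for the conformal factor one controls $\|\lambda-\overline\lambda\|_{L^\infty(D_{1/2})}+\|\nabla\lambda\|_{L^2(D_{1/2})}+\|\nabla^2(e^{-\overline\lambda}\vec\Phi)\|_{L^2(D_{1/2})}$ by $C\int_D|d\vec n|^2\,d\mu$; in particular $\#S\leq\Lambda/\varepsilon_0<\infty$. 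On $\mathbb S^2\setminus S$ every point has a neighbourhood on which $\sigma_k<\varepsilon_0$ eventually, so the $\varepsilon$-regularity estimate gives uniform $W^{2,2}_{\mathrm{loc}}(\mathbb S^2\setminus S)$ bounds for the (suitably rescaled) maps, hence weak $W^{2,2}_{\mathrm{loc}}$ convergence along a subsequence; a point-removability argument for $W^{2,2}$ weak branched immersions of finite total curvature puts the limit in $\mathcal F$, the points of $S$ being at worst branch points. For the uniform convergence I would use a Courant--Lebesgue argument (small energy on most concentric circles forces small oscillation of $\vec\Phi_k$ there), absorbing the possibly degenerating conformal factor into a bilipschitz reparametrisation $\Psi_k$: this yields equicontinuity of $\vec\Phi_k\circ\Psi_k$ and, via Arzel\`a--Ascoli, convergence to $\vec f_\infty\in W^{1,\infty}$, the connected bubble-tree image reparametrised on a single sphere. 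An initial M\"obius normalisation $f_k^1$, possible since $d_\infty>0$, makes the root limit $\vec\xi_\infty^1$ nontrivial.

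Then comes the bubbling iteration. At each $a\in S$ I would run the standard point-selection: in a conformal chart centred at $a$ pick $x_k\to a$ and $r_k\to0$ so that a fixed amount $\eta_0$ of energy sits in $B_{r_k}(x_k)$ while suitable concentric annuli at scale comparable to $r_k$ carry arbitrarily little energy, and let $f_k$ be the orientation-preserving M\"obius map acting as $y\mapsto x_k+r_k y$ in stereographic coordinates. Since $\vec\Phi_k\circ f_k$ is again a conformal weak branched immersion with the same bounded total curvature and bounded area, the previous step applies to it and produces a new bubble $\vec\xi_\infty^i\in\mathcal F$, nontrivial because it captures energy $\geq\eta_0$. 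Iterating generates a tree of bubbles, whose combinatorics is recorded by the integers $N_i$ and the points $b^{i,j}$. Because a nontrivial closed weak branched immersion of $\mathbb S^2$ carries total curvature bounded below by a fixed positive quantum (the $4\pi$-lower bound for the Willmore energy together with Gauss--Bonnet), and because the sum of the bubble curvatures is $\leq\liminf_k\int_{\mathbb S^2}|d\vec n_k|^2\,d\mu_k\leq\Lambda$ by lower semicontinuity under weak convergence, the tree terminates after finitely many steps, giving $\vec\xi_\infty^1,\ldots,\vec\xi_\infty^N$.

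It then remains to prove $\sum_{i=1}^N\int_{\mathbb S^2}1\,d\mu_{\vec\xi_\infty^i}=\lim_k\int_{\mathbb S^2}1\,d\mu_k$. The inequality ``$\leq$'' is immediate from weak $W^{2,2}_{\mathrm{loc}}$ convergence on the bubble domains together with Fatou; the reverse inequality is the main obstacle, and amounts to a \emph{no-neck-area} statement: for each blow-up the neck regions $B_{Rr_k}(x_k)\setminus B_{\rho r_k}(x_k)$ (with $1\ll\rho\ll R$ chosen after $k$) satisfy $\mu_k(\mathrm{neck})\to0$. In conformal coordinates a neck is a long thin cylinder on which the total curvature tends to $0$, so the Gauss equation $-\Delta\lambda_k=K_k e^{2\lambda_k}$ has right-hand side small in $L^1$; decomposing the neck into dyadic annuli and applying the $\varepsilon$-regularity estimate on each, the oscillation of $\lambda_k$ across the neck is controlled by the small energy it carries, and the maximality built into the point-selection rules out any dyadic annulus keeping positive area in the limit, so summing over the $O(\log(R/\rho))$ annuli — whose number cannot grow while the total area stays bounded — yields $\mu_k(\mathrm{neck})\to0$. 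I expect the genuinely delicate part here to be excluding a residual ``catenoidal'' neck carrying a logarithmically growing conformal factor; this is precisely where one uses that the limit is a closed surface, so that the associated residue/flux vanishes and such necks cannot occur. Combining the two inequalities gives the area identity and completes the proof.
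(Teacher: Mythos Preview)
The paper does not actually prove Theorem~\ref{thm:compactness}; it is quoted as \cite[Theorem~1.5]{MR3276119} (Mondino--Rivi\`ere) and used as a black box. Your outline is therefore not to be compared against anything in this paper, but it does broadly match the strategy of that reference: energy concentration analysis on the domain, $\varepsilon$-regularity for the conformal factor via H\'elein/M\"uller--Sv\v{e}r\'ak to get $W^{2,2}_{\mathrm{loc}}$ bounds away from a finite set, iterated M\"obius blow-ups at concentration points, and a neck analysis yielding the no-area-loss identity. The present paper only builds \emph{on top} of this in Theorem~\ref{thm:compactness_and_lsc}, adding to the conclusions of \cite[Theorem~6.1]{MR3276119} the convergence of $\int H\,d\mu$, of the enclosed volume, and the lower semicontinuity of $\int H^2\,d\mu$ (Lemma~\ref{lem:convergence_outsidecon_centration_points}), and then invoking the neck properties \eqref{pwc:neck_condition} already established in \cite{MR3276119}.

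Two remarks on your sketch. First, your finiteness argument for the bubble tree via ``each nontrivial bubble carries total curvature $\geq$ a universal quantum coming from the $4\pi$ Willmore bound and Gauss--Bonnet'' is not quite clean: for branched spheres $\int|d\vec n|^2=4\int H^2-2\int K$ with $\int K=4\pi+2\pi\sum(n_j-1)$, so the two terms can partially cancel. The robust reason for termination is the one you also state: by the point-selection each new bubble captures at least the fixed threshold $\eta_0$ of $\int|d\vec n|^2$, and the total is bounded by $\Lambda$. Second, your no-neck paragraph gestures at the right mechanism (small curvature on dyadic annuli controls the oscillation of $\lambda$ via $-\Delta\lambda=Ke^{2\lambda}$), but the actual argument in \cite{MR3276119} is more delicate than a dyadic sum: one needs uniform control of $\lambda$ across necks whose conformal modulus diverges, obtained from $L^{2,1}$/$L^{2,\infty}$ estimates on $\nabla\lambda$ and a Pohozaev-type identity, and the vanishing of the neck \emph{diameter} (not just area) is part of the conclusion and is what the present paper uses in \eqref{pwc:neck_condition}. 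Your heuristic about ``residue/flux vanishing because the limit is closed'' is morally related but would need to be made precise.
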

The theorem already gives (pre-)compactness, a notion of convergence, and lower semi-continuity (actually, continuity) of the third summand in \eqref{intro:Helfrich-energy} of the Canham-Helfrich energy, i.e. of the area functional. Indeed, $\vec T := (\vec f_\infty, \vec \xi_\infty^1, \ldots, \vec \xi_\infty^N)$ forms a \emph{bubble tree}, see Definition \ref{def:bubble_tree}. In particular, the limit $\vec T$ is not in the class $\mathcal F$ anymore. 
At an informal level, a non expert reader can think of a bubble tree $\vec T := (\vec f, \vec \xi^1, \ldots, \vec \xi^N)$  as a ``pearl necklace'' where each ``pearl'' corresponds to the image of a  possibly branched weak immersion  $\vec \xi^i(\mathbb S^{2})$   and $\vec f$ is a Lipschitz map from $\mathbb S^{2}$ to $\mathbb R^{3}$ ``parametrising'' the whole pearl necklace, in particular  $\vec f(\mathbb S^{2})=\bigcup_{i=1}^{N} \vec \xi^i(\mathbb S^{2})$.
\\To get a better understanding of why we obtain a bubble tree in the limit, we will look at an example of Problem \ref{prob:minimisation}. Let
\begin{equation*}
	c_0 = 1, \qquad A_0 = 2\area \mathbb S^2, \qquad V_0 = 2 \vol \mathbb S^2. 
\end{equation*}
Then, the infimum in  Problem \ref{prob:minimisation} is achieved by the bubble tree $\vec T = (\vec f,  \vec{{\rm Id}}_{\mathbb S^2}, \vec{{\rm Id}}_{\mathbb S^2})$ of twice the unit sphere. Indeed, $\mathcal H^{c_0}(\vec T) = 0$ and $\mathcal H^{c_0}(\vec \Phi) \geq 0$ for any other smooth immersion $\vec \Phi$ of $\mathbb S^2$ into $\mathbb R^3$, so $\vec T$ achieves the infimum. A minimising sequence $\vec{\Phi}_{k}(\mathbb S^{2})$ of smoothly embedded spheres converging to such a bubble tree can be achieved by glueig $(1+ 1/k) {\mathbb S}^{2}$ to $(1- 1/k) {\mathbb S}^{2}$ via a small catenoidal neck of size $2/k$.
\\ Notice also that if $\vec \Phi$ satisfies $\mathcal H^{c_0}(\vec \Phi) = 0$, then the image $\vec \Phi[\mathbb S^2]$ is the unit sphere by a classical theorem of Hopf \cite {MR707850}. 

Getting a bubble tree in the limit is in accordance with the earlier result on existence of minimisers by Choksi and Veneroni \cite {MR3116014} in the axisymmetric case: indeed the minimiser in \cite[Theorem 1] {MR3116014}  is made by a finite union of axisymmetric surfaces.
\\Moreover, the bubbling phenomenon  is also known as \emph{budding transition} in biology and has been recorded with video microscopy, see Seifert \cite{seifert1997configurations} or Seifert, Berndl, and Lipowsky \cite{seifert1991shape}.

In Chapter \ref{sec:existence} we sharpen Theorem \ref{thm:compactness} in a way that we get lower semi-continuity for the Canham-Helfrich functional. This can be seen as a possible answer to the aforementioned open question raised by R\"oger  \cite{MR2524083}.  In Chapter \ref{sec:regularity} we compute the Euler-Lagrange equation for the Canham-Helfrich energy in divergence form. Moreover, we prove that all the weak branched conformal immersions of a minimising  bubble tree (actually more generally for a critical bubble tree) are smooth away from their branch points. Our proof is based on the regularity theory for Willmore surfaces developed by Rivi\`ere \cite{MR2430975}. It relies on conservation laws discovered by Rivi\`ere \cite{MR2430975} in the context of the Willmore energy and adjusted by Bernard \cite{MR3518329} for the Canham-Helfrich energy. We get the following final result.

\begin{theorem} \label{thm:minimisation}
	Suppose $c_0 \in\mathbb R$, $A_0, V_0 > 0$, and $A_0^3 \geqslant 36 \pi V_0^2$. 
	
	Then, there exist a positive integer $N$ and weak branched conformal immersions of finite total curvature $\vec \Phi_1, \ldots, \vec \Phi_N \in \mathcal F$ such that   $\cup_{i=1}^{N} \vec \Phi_{i}(\mathbb S^{2})$ is connected, 
	\begin{equation} \label{min}
	 \inf_{\substack{\vec \Phi \in \mathcal F \\ \area \vec \Phi = A_0 \\ \vol \vec \Phi = V_0}} \mathcal H^{c_0}(\vec \Phi) = \sum_{i = 1}^N \mathcal H^{c_0}(\vec \Phi_i) 
	\end{equation}
	and 
	\begin{equation*} 
	 \sum_{i = 1}^N \area \vec \Phi_i = A_0, \qquad \sum_{i = 1}^N \vol \vec \Phi_i = V_0.
	\end{equation*}
	
	Moreover, for each $i \in \{1, \ldots, N\}$ there exist a non-negative integer $N^i$ and finitely many points $b^{i,1},\ldots,b^{i,N^i} \in \mathbb S^2$ such that $\vec \Phi_i$  is a $C^{\infty}$ immersion of $\mathbb S^2 \setminus \{b^{i,1}, \ldots, b^{i, N^i}\}$ into $\mathbb R^3$ and $b^{i,1},\ldots,b^{i,N^i}$ are branch points for $\vec \Phi_i$.

	Furthermore, there exists a constant $\varepsilon_{\ref{lem:embeddedness}}(A_0, V_0)>0$ such that if $|c_0| < \varepsilon_{\ref{lem:embeddedness}}(A_0, V_0)$, then $N = 1$ and $\vec \Phi := \vec \Phi_1$ is a smooth embedding of $\mathbb S^2$ into $\mathbb R^3$.
\end{theorem}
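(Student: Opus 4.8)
The plan is to combine the direct method with the compactness Theorem~\ref{thm:compactness}, using the (not yet stated, but certainly intended) lower semi-continuity result of Chapter~\ref{sec:existence} and the regularity result of Chapter~\ref{sec:regularity}. First I would set up the direct method: take a minimising sequence $\vec\Phi_k \in \mathcal F$ with $\area\vec\Phi_k = A_0$, $\vol\vec\Phi_k = V_0$, and $\mathcal H^{c_0}(\vec\Phi_k) \to \inf$. Since $\mathcal H^{c_0}(\vec\Phi) = \int H^2\,d\mu - 2c_0\int H\,d\mu + A_0$ on the constraint set, and $|\int H\,d\mu| \le (\area\vec\Phi)^{1/2}(\int H^2\,d\mu)^{1/2} = A_0^{1/2}(\int H^2\,d\mu)^{1/2}$ by Cauchy--Schwarz, one gets $\mathcal H^{c_0}(\vec\Phi) \ge \int H^2\,d\mu - 2|c_0|A_0^{1/2}(\int H^2\,d\mu)^{1/2} + A_0$, which is coercive in $\int H^2\,d\mu$; hence $\limsup_k \int_{\mathbb S^2}(1+|d\vec n_k|^2)\,d\mu_k < \infty$ (using $|d\vec n|^2 \le C|A|^2$ and $\int|A|^2 = 4\int H^2 - 2\int K = 4\int H^2 - 8\pi$ in the branched-sphere case, with a correction term from the branch points that is still finite). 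The diameter bound $\liminf_k \diam\vec\Phi_k[\mathbb S^2] > 0$ follows because $A_0$ is fixed and positive together with an isoperimetric-type/diameter estimate (a sphere of tiny diameter cannot have area $A_0$). After applying Proposition~\ref{pr-I.1} to make each $\vec\Phi_k$ conformal (composing with bilipschitz homeomorphisms leaves all constraints and the energy unchanged), Theorem~\ref{thm:compactness} yields, along a subsequence, a bubble tree $\vec T = (\vec f_\infty, \vec\xi^1_\infty,\ldots,\vec\xi^N_\infty)$ with $\vec\xi^i_\infty \in \mathcal F$ and $\sum_i \area\vec\xi^i_\infty = \lim_k A_0 = A_0$.

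Next I would pass the volume constraint and the energy to the limit. The volume functional is continuous under the bubble-tree convergence (the enclosed volume is $\frac13\int_{\mathbb S^2}\vec\Phi\cdot(\partial_{x^1}\vec\Phi\times\partial_{x^2}\vec\Phi)\,dx$, which is continuous under $C^0$ convergence of $\vec\Phi_k\circ\Psi_k$ together with weak $W^{2,2}_{loc}$ convergence on each bubble — one must sum over bubbles, counting multiplicity), giving $\sum_i\vol\vec\xi^i_\infty = V_0$. For the energy, the key input is the lower semi-continuity statement of Chapter~\ref{sec:existence}: $\sum_i \mathcal H^{c_0}(\vec\xi^i_\infty) \le \liminf_k \mathcal H^{c_0}(\vec\Phi_k) = \inf$. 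Combined with the obvious inequality $\inf \le \sum_i \mathcal H^{c_0}(\vec\xi^i_\infty)$ — which needs an approximation argument showing that a bubble tree satisfying the two constraints can be approximated in energy by genuine embeddings in $\mathcal F$ satisfying the constraints (the catenoidal-neck gluing sketched in the introduction, adjusted to restore $A_0$ and $V_0$ exactly) — this forces equality \eqref{min}. Setting $\vec\Phi_i := \vec\xi^i_\infty$ and discarding any "ghost" bubbles, connectedness of $\cup_i\vec\Phi_i[\mathbb S^2]$ follows from the pearl-necklace structure of the bubble tree (consecutive bubbles touch at the concentration points $b^{i,j}$). Then the regularity theorem from Chapter~\ref{sec:regularity} applies to each $\vec\Phi_i$, which is a critical point of $\mathcal H^{c_0}$ under the area/volume constraints (hence solves the Euler--Lagrange equation with Lagrange multipliers, in Rivi\`ere--Bernard divergence form), yielding smoothness of $\vec\Phi_i$ away from the branch points $b^{i,1},\ldots,b^{i,N^i}$.

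For the last assertion, the plan is a smallness argument for $|c_0|$. When $c_0 = 0$ the problem reduces to the constrained Willmore problem solved by Schygulla \cite{MR2928137}, whose minimiser is a single smooth embedded sphere (for $A_0^3 > 36\pi V_0^2$; the equality case is the round sphere) with $\mathcal H^0$-energy strictly below $8\pi$, the threshold at which bubbling/non-embeddedness can occur (by the Li--Yau inequality, energy $< 8\pi$ forces embeddedness, and a bubble tree with $\ge 2$ bubbles has Willmore energy $\ge 8\pi$). I would then show the minimum value $m(c_0) := \inf$ depends continuously (indeed Lipschitz, since $|\mathcal H^{c_0}(\vec\Phi) - \mathcal H^{0}(\vec\Phi)| \le 2|c_0|\int|H|\,d\mu \le 2|c_0|A_0^{1/2}(\mathcal H^0(\vec\Phi))^{1/2}$) on $c_0$, so that for $|c_0|$ small the minimal energy stays strictly below the bubbling threshold; a compactness/continuity argument on the minimisers themselves (any sequence of minimisers for $c_0 \to 0$ subconverges to a minimiser for $c_0 = 0$) then shows that for $|c_0| < \varepsilon_{\ref{lem:embeddedness}}(A_0,V_0)$ the minimiser cannot bubble ($N = 1$) and, having energy below $8\pi$, must be embedded by Li--Yau; smoothness everywhere (no branch points) follows because a branch point would again push the energy up by a quantized amount ($\ge 4\pi$ per branch point). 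The main obstacle throughout is making the inequality $\inf \le \sum_i \mathcal H^{c_0}(\vec\xi^i_\infty)$ rigorous — i.e. the density of constrained smooth embeddings near an arbitrary constrained bubble tree in the energy — because the neck-gluing must simultaneously preserve \emph{both} the area and the volume exactly while contributing negligible energy; this is where the catenoid's vanishing Willmore energy and a careful two-parameter adjustment of the gluing are essential.
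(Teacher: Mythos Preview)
Your proposal is correct and follows essentially the same route as the paper: the paper's proof also takes a minimising sequence, bounds $\int H^2\,d\mu$ (via the identity $H^2 = 2(H-c_0)^2 - (H-2c_0)^2 + 2c_0^2$ rather than your Cauchy--Schwarz coercivity, but to the same effect) and hence $\int|d\vec n|^2\,d\mu$ by Gauss--Bonnet, invokes \eqref{pre:diameter_bound} for the diameter lower bound, and then applies Theorem~\ref{thm:compactness_and_lsc} (compactness plus lower semi-continuity of $\mathcal W$ and continuity of area, volume, and total mean curvature), Theorem~\ref{thm:smoothness} (regularity away from branch points), and Lemma~\ref{lem:embeddedness} together with Li--Yau \eqref{pre:Li-Yau-inequality} for the small-$|c_0|$ embeddedness. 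You are in fact more careful than the paper on one point: the paper simply asserts the equality in \eqref{min} after citing lower semi-continuity (which a priori only yields $\sum_i \mathcal H^{c_0}(\vec\Phi_i)\le \inf$), without spelling out the constrained catenoidal-neck approximation you correctly flag as needed for the reverse inequality.
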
 

\begin{proof}
	Let $\vec \Phi_1, \vec \Phi_2, \ldots$ be a minimising sequence of \eqref{min}. There holds
	\begin{equation} \label{pm:bound_Willmore}
		\int_{ \mathbb S^2 } H_{\vec \Phi_k}^2 \, d\mu_{\vec \Phi_{k}} = \int_{ \mathbb S^2 } 2(H_{\vec \Phi_{k}} - c_0)^2 - (H_{\vec \Phi_k} - 2c_0)^2 + 2c_0^2 \, d\mu_{\vec \Phi_{k}} \leqslant 2\mathcal H^{c_0}(\vec \Phi_k) +2c_0^2A_0
	\end{equation}
	where $H_{\vec \Phi_k}$ is the mean curvature corresponding to $\vec \Phi_k$, see \eqref{pre:mean_curvature}. By the Gauss-Bonnet theorem (see \eqref{eq:GaussBonnetBranchedImm} for the precise statement in case of weak branched immersions and \eqref{eq:ControlL2II} for the estimate below),
	\begin{equation*}
		 \int_{ \mathbb S^2 } |d \vec n_{\vec \Phi_{k}}|^2 \, d\mu_{\vec \Phi_{k}} \leq 4 \int_{ \mathbb S^2 }H_{\vec \Phi_k}^2 \, d\mu_{\vec \Phi_{k}} 
	\end{equation*} 
	and thus 
	\begin{equation*}
		\int_{ \mathbb S^2 } 1 + |d \vec n_{\vec \Phi_{k}}|^2 \, d\mu_{\vec \Phi_{k}} \leqslant 8\mathcal H^{c_0}(\vec \Phi_k) + (1+8c_0^2)A_0
	\end{equation*}	
	which means the first inequality of \eqref{wc:uniformly_bounded_energy} is satisfied. Moreover, \eqref{pre:diameter_bound} implies the second inequality of \eqref{wc:uniformly_bounded_energy}. Hence, we can apply Theorem \ref{thm:compactness_and_lsc}, Theorem \ref{thm:smoothness}, Lemma \ref{lem:embeddedness},  \eqref{pre:lower_bound_Willmore} and \eqref{pre:Li-Yau-inequality} to conclude the proof.
\end{proof}

\begin{remark}
The arguments in the proof of  Theorem \ref{thm:minimisation} yield also that the minimum of $\mathcal H^{c_0}$ is achieved in the class of bubble trees of possibly branched weak immersions,  by a bubble tree of possibly branched immersions which are smooth out of the branch points.
\end{remark}

A more general form of the Canham-Helfrich energy is given by
\begin{equation*}
\mathcal{H}^{c_0}_{\alpha, \rho}(\vec \Phi) := \int_{\mathbb S^2} (H_{\vec \Phi} - c_0)^2 \, d\mu_{\vec \Phi} + \alpha \area \vec \Phi + \rho \vol \vec \Phi,
\end{equation*}
for $\vec \Phi \in \mathcal F$ where the parameter $\alpha \geqslant 0$ is referred to as \emph{tensile stress}, and $\rho \geqslant 0$ as \emph{osmotic pressure}. We get the following solution of Problem (P2) from the introduction in \cite{MR3757086}.

\begin{theorem}
	Suppose $c_0 \in \mathbb R$, $\alpha > 0$, and $\rho \geqslant 0$. Then, there holds
	\begin{equation*}
		\inf_{\vec{\Phi} \in \mathcal F} \mathcal H^{c_0}_{\alpha, \rho}(\vec \Phi) \leqslant 4\pi.
	\end{equation*}
	Moreover, if the inequality is strict, then there exist $\vec \Phi_0 \in \mathcal F$, a positive integer $N_0$, and points $b_1, \ldots, b_{N_0} \in \mathbb S^2$ such that
	\begin{equation*}
		\inf_{\vec{\Phi} \in \mathcal F} \mathcal H^{c_0}_{\alpha, \rho}(\vec \Phi) = \mathcal H^{c_0}_{\alpha, \rho}(\vec \Phi_0), 
	\end{equation*}
	$\vec \Phi_0$  is a $C^{\infty}$ immersion of $\mathbb S^2 \setminus \{b^{i,1}, \ldots, b^{i, N^i}\}$ into $\mathbb R^3$ and $b^{i,1},\ldots,b^{i,N^i}$ are branch points.
	\\Furthermore,  if $|c_0| < \sqrt{\alpha}$, then $\vec \Phi_0$ is a smooth embedding. 
\end{theorem}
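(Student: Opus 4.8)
The plan is to obtain the inequality $\beta:=\inf_{\vec\Phi\in\mathcal F}\mathcal H^{c_0}_{\alpha,\rho}(\vec\Phi)\leq 4\pi$ from an explicit competitor and then to produce a minimiser by the direct method, exactly along the lines of the proof of Theorem~\ref{thm:minimisation}, with $\alpha>0$ playing the role there taken by the area constraint in providing compactness, and the hypothesis $\beta<4\pi$ playing the role taken by the strict isoperimetric inequality. For the upper bound I would test on round spheres $S_r\in\mathcal F$ of radius $r$ centred at the origin; a one-line computation gives
\[
\mathcal H^{c_0}_{\alpha,\rho}(S_r)=4\pi(1-c_0 r)^2+4\pi\alpha\, r^2+\tfrac{4}{3}\pi\rho\, r^3\ \longrightarrow\ 4\pi\qquad\text{as }r\to 0^+,
\]
hence $\beta\leq 4\pi$.

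Assume now $\beta<4\pi$ and let $(\vec\Phi_k)$ be a minimising sequence, taken conformal by Proposition~\ref{pr-I.1}. Using $\rho\,\vol\vec\Phi\geq 0$, the algebraic identity $H^2=2(H-c_0)^2-(H-2c_0)^2+2c_0^2$ (see \eqref{pm:bound_Willmore}) and the Willmore bound $\int_{\mathbb S^2}H_{\vec\Phi}^2\,d\mu_{\vec\Phi}\geq 4\pi$ of \eqref{pre:lower_bound_Willmore} (which holds on all of $\mathcal F$), one obtains for every $\vec\Phi\in\mathcal F$
\[
\mathcal H^{c_0}_{\alpha,\rho}(\vec\Phi)\ \geq\ \max\bigl\{\,2\pi+(\alpha-c_0^2)\,\area\vec\Phi,\ \ \alpha\,\area\vec\Phi\,\bigr\},
\]
whose infimum over $\area\vec\Phi\in(0,\infty)$ is the strictly positive number $\min\{2\pi,\,2\pi\alpha/c_0^2\}$ (with $2\pi\alpha/c_0^2:=+\infty$ if $c_0=0$); thus $\beta>0$. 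Applied to $(\vec\Phi_k)$, the same inequality and $\alpha>0$ bound $\area\vec\Phi_k$, and then $\int_{\mathbb S^2}H_k^2\,d\mu_k\leq 2\mathcal H^{c_0}_{\alpha,\rho}(\vec\Phi_k)+2c_0^2\,\area\vec\Phi_k$ together with the Gauss--Bonnet estimate \eqref{eq:ControlL2II} bound $\int_{\mathbb S^2}1+|d\vec n_k|^2\,d\mu_k$. For the diameter I would argue by contradiction: if $\liminf_k\area\vec\Phi_k=0$, then Cauchy--Schwarz, $|\int_{\mathbb S^2}H_k\,d\mu_k|\leq(\area\vec\Phi_k)^{1/2}(\int_{\mathbb S^2}H_k^2\,d\mu_k)^{1/2}$, together with $\int_{\mathbb S^2}H_k^2\,d\mu_k\geq 4\pi$ gives $\int_{\mathbb S^2}(H_k-c_0)^2\,d\mu_k\to 4\pi$, so $\liminf_k\mathcal H^{c_0}_{\alpha,\rho}(\vec\Phi_k)\geq 4\pi$, contradicting $\beta<4\pi$; hence $\liminf_k\area\vec\Phi_k>0$, and then the diameter estimate \eqref{pre:diameter_bound} with the curvature bound gives $\liminf_k\diam\vec\Phi_k[\mathbb S^2]>0$.

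These are precisely the hypotheses of the compactness and lower semicontinuity theorem used in the proof of Theorem~\ref{thm:minimisation} (namely Theorem~\ref{thm:compactness_and_lsc}, the sharpening of Theorem~\ref{thm:compactness}). Invoking it and passing to a subsequence, I would obtain a bubble tree $(\vec f_\infty,\vec\xi_\infty^1,\ldots,\vec\xi_\infty^N)$ with $\vec\xi_\infty^i\in\mathcal F$, the lower semicontinuity $\sum_{i=1}^N\int_{\mathbb S^2}(H_{\vec\xi_\infty^i}-c_0)^2\,d\mu_{\vec\xi_\infty^i}\leq\liminf_k\int_{\mathbb S^2}(H_k-c_0)^2\,d\mu_k$, the conservation of area (Theorem~\ref{thm:compactness}) and of volume under bubble-tree convergence; together these yield $\sum_{i=1}^N\mathcal H^{c_0}_{\alpha,\rho}(\vec\xi_\infty^i)\leq\beta$. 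Since each $\vec\xi_\infty^i$ lies in $\mathcal F$ and hence has energy at least $\beta>0$, it follows that $N=1$ and $\mathcal H^{c_0}_{\alpha,\rho}(\vec\xi_\infty^1)=\beta$, so that $\vec\Phi_0:=\vec\xi_\infty^1\in\mathcal F$ realises the infimum. This bubble-counting step is the one genuinely new point of the argument, and its sole engine is the positivity $\beta>0$; everything preceding it is a routine verification of the hypotheses of Theorem~\ref{thm:compactness_and_lsc}, and everything following it is delegated to the paper's general theory.

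It remains to transfer regularity and, when $|c_0|<\sqrt\alpha$, embeddedness. As a minimiser, $\vec\Phi_0$ is a critical point of $\mathcal H^{c_0}_{\alpha,\rho}$; the terms $\alpha\,\area+\rho\,\vol$ perturb the Euler--Lagrange equation of $\mathcal H^{c_0}$ only by the lower-order contributions furnished by the area and volume Lagrange multipliers in Theorem~\ref{thm:minimisation}, so Theorem~\ref{thm:smoothness} applies without change and gives that $\vec\Phi_0$ is a $C^\infty$ immersion of $\mathbb S^2$ away from finitely many branch points. If moreover $c_0^2<\alpha$, then from $\mathcal H^{c_0}_{\alpha,\rho}(\vec\Phi_0)=\beta<4\pi$ and $\rho\,\vol\vec\Phi_0\geq 0$ we get $\int_{\mathbb S^2}(H_{\vec\Phi_0}-c_0)^2\,d\mu_{\vec\Phi_0}+\alpha\,\area\vec\Phi_0<4\pi$, hence by the identity in \eqref{pm:bound_Willmore},
\[
\int_{\mathbb S^2}H_{\vec\Phi_0}^2\,d\mu_{\vec\Phi_0}\ \leq\ 2\int_{\mathbb S^2}(H_{\vec\Phi_0}-c_0)^2\,d\mu_{\vec\Phi_0}+2c_0^2\,\area\vec\Phi_0\ <\ 8\pi-2(\alpha-c_0^2)\,\area\vec\Phi_0\ <\ 8\pi
\]
since $\area\vec\Phi_0>0$; the strict Li--Yau inequality \eqref{pre:Li-Yau-inequality} together with Lemma~\ref{lem:embeddedness} then rules out branch points and self-intersections, so $\vec\Phi_0$ is a smooth embedding. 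The step I expect to require the most care is making sure that Theorem~\ref{thm:compactness_and_lsc} is stated flexibly enough to cover the volume term and the unconstrained setting; with that granted, the whole proof is a clean direct-method argument resting on the paper's main results.
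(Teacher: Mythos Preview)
Your argument is correct and follows the same route as the paper's: shrinking round spheres for the upper bound, the direct method via Theorem~\ref{thm:compactness_and_lsc}, Theorem~\ref{thm:smoothness} for regularity, and Li--Yau for embeddedness. Two cosmetic points: the citation of Lemma~\ref{lem:embeddedness} at the very end is misplaced (that lemma concerns the area/volume-constrained problem and is not needed here---the Li--Yau inequality~\eqref{pre:Li-Yau-inequality} you already invoke suffices on its own once $\int H^2<8\pi$), and your $N=1$ step via the positivity $\beta>0$ is a slight variant of the paper's simpler observation that any single bubble is itself a competitor in~$\mathcal F$, whence $\mathcal H^{c_0}_{\alpha,\rho}(\vec\Phi_1)\geq\beta$ together with $\sum_i\mathcal H^{c_0}_{\alpha,\rho}(\vec\Phi_i)=\beta$ already yields a minimiser.
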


\begin{proof}
	Taking $\vec \Phi_k = \frac{1}{k} \mathbb S^2$ for each integer $k$ leads to  
	\begin{equation*}
		\inf_{\vec{\Phi} \in \mathcal F} \mathcal H^{c_0}_{\alpha, \rho}(\vec \Phi)  \leqslant \liminf_{k\to \infty}  \mathcal H^{c_0}_{\alpha, \rho}(\vec \Phi_{k}) =  \int_{ \mathbb S^2 } H_{\mathbb S^2}^2 \, d\mu_{\mathbb S^2} = 4\pi,
	\end{equation*} 
	which proves the first statement. 
	Now assume $\vec \Phi_1, \vec \Phi_2, \ldots$ is a sequence in $\mathcal F$ such that
	\begin{equation*}
		\lim_{k \to \infty} \mathcal H^{c_0}_{\alpha, \rho}(\vec \Phi_k) = \inf_{\vec{\Phi} \in \mathcal F} \mathcal H^{c_0}_{\alpha, \rho}(\vec \Phi) < 4\pi.
	\end{equation*}
	As  $\alpha > 0$, we have $\sup_{k} \area \vec \Phi_{k}<\infty$ and thus, using \eqref{pm:bound_Willmore}, also  $\sup_{k} \int_{ \mathbb S^2 } H_{\vec \Phi_k}^2 \, d\mu_{\vec \Phi_{k}}<\infty$. 
	\\A simple contradiction argument using \eqref{pre:diameter_bound} now leads to 
	\begin{equation*}
		\liminf_{k \to \infty} \diam \vec \Phi_k[\mathbb S^2] > 0,
	\end{equation*}
	as otherwise we had
	\begin{equation*}
		\lim_{k \to \infty} \area \vec \Phi_k = 0, \qquad \lim_{k \to \infty} \mathcal H^{c_0}_{\alpha, \rho}(\vec \Phi_k) = \lim_{k \to \infty} \int_{ \mathbb S^2 } H_{\vec \Phi_k}^2 \, d\mu_{\vec \Phi_{k}} \geqslant 4\pi.
	\end{equation*}
	Therefore, analogously to the proof of Theorem \ref{thm:minimisation}, we can apply Theorem \ref{thm:compactness_and_lsc} to obtain an integer $N$ and $\vec \Phi_1, \ldots, \vec \Phi_N \in \mathcal F$ such that 
	\begin{equation*}
		\inf_{\vec{\Phi} \in \mathcal F} \mathcal H^{c_0}_{\alpha, \rho}(\vec \Phi) = \sum_{i = 1}^N\mathcal H^{c_0}_{\alpha, \rho}(\vec \Phi_i).
	\end{equation*}
	Obviously,
	\begin{equation*}
		\mathcal H^{c_0}_{\alpha, \rho}(\vec \Phi_1) \leqslant \sum_{i = 1}^N\mathcal H^{c_0}_{\alpha, \rho}(\vec \Phi_i)
	\end{equation*}
	and since there are no constraints, we simply get $N=1$. Letting $\vec \Phi_0 := \vec \Phi_1$, we infer from \eqref{pm:bound_Willmore} that in case $|c_0| \leqslant \sqrt{\alpha}$
	\begin{equation*}
		\int_{ \mathbb S^2 } H_{\vec{\Phi}_0}^2 \, d\mu_{\vec \Phi_0} \leqslant 2\mathcal H^{c_0}(\vec \Phi_k) +2c_0^2 \area \vec \Phi_0[\mathbb S^2] \leqslant 2\mathcal H^{c_0}_{\alpha, \rho}(\vec \Phi_0) < 8\pi. 
	\end{equation*}
	The conclusion follows from Theorem \ref{thm:smoothness}, and \eqref{pre:Li-Yau-inequality}.
\end{proof}

\textbf{Acknowledgements.}
A.M. is supported by the EPSRC First Grant EP/R004730/1  ``Optimal transport and Geometric Analysis'' and by the ERC Starting Grant  802689 ``CURVATURE''.
\\ C.S. is supported by the EPSRC as part of the MASDOC DTC at the University of Warwick, grant No. EP/HO23364/1.

\section{Preliminaries}

\subsection{Notation}
We adopt the conventions of  \cite{MR3524220}. To avoid indices and to get clearly arranged equations, we will employ the following suggestive notation. For $\mathbb R^3$ valued maps $\vec e$ and $\vec f$ defined on the unit disk $D^2$, we write
\begin{gather*}
	\nabla \vec e := \begin{pmatrix} \partial_{x^1} \vec e \\ \partial_{x^2} \vec e \end{pmatrix}, \qquad \nabla^\bot \vec e := \begin{pmatrix} - \partial_{x^2} \vec e  \\  \; \; \partial_{x^1} \vec e \\ \end{pmatrix} \\
	\langle \vec e, \nabla \vec f \rangle := \begin{pmatrix} \vec e \boldsymbol \cdot \partial_{x^1} \vec f\\ \vec e \boldsymbol \cdot \partial_{x^2} \vec f\end{pmatrix}, \qquad \vec e \times \nabla \vec f := \begin{pmatrix} \vec e \boldsymbol \times \partial_{x^1} \vec f\\ \vec e \boldsymbol \times \partial_{x^2} \vec f\end{pmatrix}
\end{gather*}
as well as 
\begin{gather*}
	\nabla \vec e \times \nabla \vec f := \partial_{x^1} \vec e \times \partial_{x^1} \vec f + \partial_{x^2} \vec e \times \partial_{x^2} \vec f, \\
	\vec e \boldsymbol \cdot \nabla \vec f := \vec e \boldsymbol \cdot \partial_{x^1} \vec f + \vec e \boldsymbol \cdot \partial_{x^2} \vec f,
	\qquad \nabla \vec e \boldsymbol \cdot \nabla f := \partial_{x^1} \vec e \boldsymbol \cdot \partial_{x^1} \vec f + \partial_{x^2} \vec e \boldsymbol \cdot \partial_{x^2} \vec f, 
\end{gather*}
where $\boldsymbol \cdot$ denotes the Euclidean inner product and $\times$ denotes the usual vector product on $\mathbb R^3$. Similarly, for $\lambda: D^2 \to \mathbb R$ we write
\begin{equation*}
	\langle \nabla \lambda, \vec e \rangle := \begin{pmatrix} (\partial_{x^1} \lambda) \vec e\\ (\partial_{x^2} \lambda) \vec e \end{pmatrix}, \qquad \langle \nabla \lambda, \nabla \vec e \rangle := \partial_{x^1}\lambda \partial_{x^1} \vec e + \partial_{x^2}\lambda \partial_{x^2} \vec e.
\end{equation*}
Moreover, for a vector field 
\begin{equation*}
	\vec X = \begin{pmatrix} \vec X^1 \\ \vec X^2 \end{pmatrix}
\end{equation*}
with components $\vec X^1, \vec X^2: D^2 \to \mathbb R^3$, we define the divergence 
\begin{equation*}
	\Div \vec X := \partial_{x^1} \vec X^1 + \partial_{x^2} \vec X^2.
\end{equation*}
The $m$-dimensional Lebesgue measure is denoted by $\mathscr L^m$.

\subsection{Weak (possibly branched) conformal immersions} \label{subs:weak_immersions}
We adapt the notion of weak immersions which was independently formalized by Rivi\`ere \cite{MR3276154} and Kuwert and Li \cite{MR2928715}.

Let $(\Sigma, c_{0})$ be a smooth closed Riemann surface (in the rest of the paper we will take $(\Sigma, c_{0})$ to be the 2-sphere endowed with the standard round metric). Without loss of generality we can assume that  $(\Sigma, c_{0})$ is endowed with a metric $g_{c_{0}}$ of constant curvature and area $4\pi$ (see for instance \cite{MR2247485}).  For the definition of the Sobolev spaces $W^{k,p}(\Sigma, \mathbb R^3)$ on $\Sigma$ see for instance Hebey \cite{MR1688256}. A map $\vec \Phi: \Sigma \to \mathbb R^3$ is called a \emph{weak branched conformal immersion with finite total curvature} if and only if there exists a positive integer $N$, finitely many points $b_1, \ldots, b_N \in \Sigma$ such that
\begin{equation*}  
	\vec \Phi \in W^{1, \infty}(\Sigma, \mathbb R^3) \cap W_{\mathrm{loc}}^{2,2}(\Sigma \setminus \{b_1, \cdots, b_N\}, \mathbb R^3), 
\end{equation*} 
there holds
\begin{equation} \label{eq:conformal}
	\left\{ \begin{split}
	|\partial_{x^1} \vec \Phi| = |\partial_{x^2} \vec \Phi| \\
	\partial_{x^1} \vec \Phi \boldsymbol \cdot \partial_{x^2} \vec \Phi = 0
	\end{split} \right. 
\end{equation}
almost everywhere for any conformal chart $x$ of $\Sigma$, 
\begin{equation*}
	\log |d \vec \Phi| \in L^\infty_\mathrm{loc}(\Sigma \setminus \{b_1, \ldots, b_N\}),
\end{equation*}
and its Gauss map $\vec n$ defined by 
\begin{equation*}
\vec n := \frac{\partial_{x^1} \vec \Phi \times \partial_{x^2} \vec \Phi}{|\partial_{x^1} \vec \Phi \times \partial_{x^2} \vec \Phi|}
\end{equation*}
in any local positive chart $x$ of $\Sigma$ satisfies 
\begin{equation}\label{pre:finie_total_curvature}
	\vec n \in W^{1,2}(\Sigma, \mathbb R^3).
\end{equation}
The space of weak branched conformal immersions with finite total curvature is denoted by $\mathcal F_{\Sigma}$ or just $\mathcal{F}$ in case $\Sigma = \mathbb S^2$. We define the $L^\infty$-metric $g$ pointwise for almost every $p \in \Sigma$ by
\begin{equation*}
	g_p(X, Y) := d\vec \Phi_p(X) \boldsymbol \cdot d\vec \Phi_p(Y) 
\end{equation*}
for elements $X, Y$ of the tangent space $T_p\Sigma$. In the usual way, the $L^\infty$-metric $g$ induces a Radon measure $\mu_g$ on $\Sigma$. The conformality condition \eqref{eq:conformal} implies that $g=e^{2\lambda} g_{c_{0}}$ for some $\lambda \in L^{\infty}_{loc} (\Sigma \setminus \{b_{1}, \ldots, b_{N}\})$ called \emph{conformal factor}.   
Moreover, we define the second fundamental form $\vec{\mathbb I}$ pointwise for almost every $p \in \Sigma$ by
\begin{equation*}
	\vec{\mathbb I}_p: T_p\Sigma \times T_p \Sigma \to \mathbb R^3, \qquad \vec{\mathbb I}_p(X, Y) := -[d\vec n_p(X) \boldsymbol \cdot d\vec \Phi_p(Y)] \vec n.
\end{equation*}
The mean curvature vector $\vec H$ and the scalar mean curvature $H$ are given by
\begin{equation} \label{pre:mean_curvature}
	\vec H := \frac{1}{2}\trace \vec{\mathbb I}, \qquad H := \vec n \boldsymbol \cdot \vec H.
\end{equation}
Note that condition \eqref{pre:finie_total_curvature} ensures
\begin{equation} \label{pre:L2curvature}
	H \in L^2(\Sigma).
\end{equation}

\subsubsection{Singular points and Gauss-Bonnet Theorem of weak branched immersions}

First of all let us recall the following result first proved by  M\"uller-Sv\v{e}r\'ak  \cite{MR1366547}. For a different proof using H\'elein's moving frames technique \cite{MR1913803}, see \cite[Lemma A.5]{MR3276154}; see also \cite[Theorem 3.1]{MR2928715}) and \cite[Section 2.1]{MR3276119}.

\begin{proposition}\label{prop:SingPoints}
Let $\vec \Phi: \Sigma \to \mathbb R^3$ be a \emph{weak branched conformal immersion with finite total curvature} with singular points $b_1, \ldots, b_N \in \Sigma$. Let $\lambda \in L^{\infty}_{loc} (\Sigma \setminus \{b_{1}, \ldots, b_{N}\})$ be the conformal factor, i.e. $g=\vec \Phi^{*} g_{\mathbb R^{3}}=e^{2\lambda} g_{c_{0}}$.
\\Then  $\vec \Phi\in W^{2,2}(\Sigma)$ and  the conformal factor  $\lambda$ is an element of  $ L^{1}(\Sigma)$. 
\\Moreover,  for each singular point $b_{j}, j=1,\ldots,N,$ there exists  a strictly positive integer $n_{j}\in \mathbb N$ such that the following holds:
\begin{itemize}
\item For every $b_{j}$ there exists a local conformal chart $z$ centred at $\{b_{j}\}=\{z=0\}$ such that
$$\lambda(z)=(n_{j}-1) \log|z|+\omega (z)$$
for some $\omega\in C^{0}\cap W^{1,2}$. 
\item  The multiplicity of the immersion $\vec \Phi$ at $\vec \Phi(b_{j})$ is $n_{j}$. Moreover, if $n_{j}=1$, then $\vec \Phi$ is a conformal immersion of a neighbourhood of $b_{j}$.
\item The conformal factor $\lambda$ satisfies the following singular Liouville equation in distributional sense
\begin{equation}\label{eq:PDElamb}
-\Delta_{g_{c_{0}}} \lambda = K_{\vec \Phi} e^{2\lambda}-K_0-2\pi \sum_{j=1} ^{N} \left[(n_j-1) \delta_{b_j}\right],
\end{equation}
where  $\delta_{b_j}$ is the Dirac delta centred at $b_{j}$,  $K_{\vec \Phi}$ is the Gaussian curvature of $\vec \Phi$,  and $K_0\in \mathbb R$ is the (constant) curvature of $(\Sigma, g_{c_{0}})$. 
\end{itemize}
\end{proposition}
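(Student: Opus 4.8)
\emph{Strategy and global regularity.} The plan is to localise: away from the $b_j$ the statement is the non-branched M\"uller--Sv\v{e}r\'ak / H\'elein theory for $W^{2,2}$ conformal immersions, so all the work sits in a conformal chart $z\colon D\to\Sigma$ around each singular point with $z(b_j)=0$ and $z^{-1}(\{b_1,\dots,b_N\})\cap D=\{0\}$. First I would upgrade the regularity of $\vec\Phi$. On $D\setminus\{0\}$ the conformality relations \eqref{eq:conformal} give the Gauss formula $\Delta\vec\Phi=2e^{2\lambda}\vec H$, where $e^{2\lambda}=|\partial_{x^1}\vec\Phi|^2$ is bounded because $\vec\Phi\in W^{1,\infty}$, while $\int_D e^{2\lambda}|\vec H|^2\,dx<\infty$ by \eqref{pre:finie_total_curvature} and \eqref{pre:mean_curvature}; hence $\Delta\vec\Phi\in L^2(D)$. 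Since $\vec\Phi$ is Lipschitz it lies in $W^{1,2}(D)$, and as a single point has zero $W^{1,2}$-capacity in dimension two the identity $\Delta\vec\Phi=2e^{2\lambda}\vec H$ carries no atom at $0$; elliptic regularity then gives $\vec\Phi\in W^{2,2}(D)$, and patching with the regular part yields $\vec\Phi\in W^{2,2}(\Sigma)$.

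\emph{Local form of the conformal factor.} On $D\setminus\{0\}$ one has the Liouville equation $-\Delta\lambda=K_{\vec\Phi}e^{2\lambda}$, and $K_{\vec\Phi}e^{2\lambda}=\vec n\boldsymbol\cdot(\partial_{x^1}\vec n\times\partial_{x^2}\vec n)$ is a sum of Jacobians of components of $\vec n\in W^{1,2}$, with $\int_D|K_{\vec\Phi}|e^{2\lambda}\,dx\le\tfrac12\int_D|\nabla\vec n|^2\,dx<\infty$ by conformal invariance of the Dirichlet energy. Solving $-\Delta\lambda_1=K_{\vec\Phi}e^{2\lambda}$ on $D$ with $\lambda_1|_{\partial D}=0$, Wente's inequality gives $\lambda_1\in C^0\cap W^{1,2}(D)$. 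Then $\lambda-\lambda_1$ is harmonic on $D\setminus\{0\}$ and bounded above near $0$ (since $\lambda$ is, by $\vec\Phi\in W^{1,\infty}$), so B\^ocher's theorem on one-sided-bounded isolated singularities yields $\lambda-\lambda_1=\gamma\log|z|+v$ with $v$ harmonic on $D$ and $\gamma\ge0$. Thus $\lambda=\gamma\log|z|+\omega$ with $\omega:=\lambda_1+v\in C^0\cap W^{1,2}(D)$; in particular $\lambda\in L^1$ near each $b_j$, which together with $\lambda\in L^\infty_{\mathrm{loc}}(\Sigma\setminus\{b_1,\dots,b_N\})$ gives $\lambda\in L^1(\Sigma)$, and $-\Delta\lambda=K_{\vec\Phi}e^{2\lambda}-2\pi\gamma\,\delta_0$ in $\mathcal{D}'(D)$.

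\emph{Integrality and multiplicity.} It remains to show $n_j:=\gamma+1$ is a positive integer equal to the multiplicity of $\vec\Phi$ at $\vec\Phi(b_j)$. Consider $\phi:=\partial_z\vec\Phi=\tfrac12(\partial_{x^1}-i\partial_{x^2})\vec\Phi\colon D\setminus\{0\}\to\mathbb{C}^3$, which is nowhere zero on $D\setminus\{0\}$ and satisfies, by conformality, $\phi\boldsymbol\cdot\phi=0$ and $|\phi|^2=\tfrac12 e^{2\lambda}$, hence $|\phi|\simeq|z|^\gamma$; moreover $\partial_{\bar z}\phi=\tfrac14\Delta\vec\Phi=\tfrac12 e^{2\lambda}\vec H$ obeys the weighted bound $\int_D|z|^{-2\gamma}|\partial_{\bar z}\phi|^2\,dx=\tfrac14\int_D e^{2\lambda}e^{2\omega}|\vec H|^2\,dx\le C\int_D e^{2\lambda}|\vec H|^2\,dx<\infty$. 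So $\phi$ is an ``almost holomorphic'' null curve of prescribed radial growth; factoring $\phi=z^{\lfloor\gamma\rfloor}\psi$ to reduce to $\gamma\in[0,1)$ turns $|z|^{-2\gamma}$ into a Muckenhoupt $A_2$-weight, and a Cauchy-transform representation (equivalently, H\'elein's Coulomb moving frame $(\vec e_1,\vec e_2)$ with $\partial_{x^1}\vec\Phi+i\partial_{x^2}\vec\Phi=e^\lambda(\vec e_1+i\vec e_2)$ together with the Coulomb gauge equations) shows $\phi$ differs from a genuine holomorphic $\mathbb{C}^3$-valued map by a term of strictly lower order near $0$. Since holomorphic maps vanish to integer order, $\gamma\in\mathbb{Z}_{\ge0}$ and $\phi=z^\gamma\vec\psi$ with $\vec\psi\in C^0(D,\mathbb{C}^3)$, $\vec\psi(0)\neq0$, $\vec\psi(0)\boldsymbol\cdot\vec\psi(0)=0$; integrating, $\vec\Phi(z)=\vec\Phi(b_j)+\mathrm{Re}\big(\tfrac{2}{\gamma+1}z^{\gamma+1}\vec\psi(0)\big)+o(|z|^{\gamma+1})$, so $\vec\Phi$ covers its image $\gamma+1$ times near $b_j$, i.e. the multiplicity is $n_j=\gamma+1$, and when $\gamma=0$ the factor $e^\lambda$ is bounded below so $\vec\Phi$ is a genuine conformal immersion near $b_j$. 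Finally, adding the local identities (with $\gamma=n_j-1$) over the charts to the equation $-\Delta_{g_{c_0}}\lambda=K_{\vec\Phi}e^{2\lambda}-K_0$ on $\Sigma\setminus\{b_1,\dots,b_N\}$ yields \eqref{eq:PDElamb}.

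\emph{Main obstacle.} Everything except the integrality of $\gamma$ is either soft (capacity/elliptic regularity for Step~1) or classical potential theory (Wente plus B\^ocher for Step~2). The genuine difficulty is upgrading ``$\lambda=\gamma\log|z|+\omega$ with $\omega$ continuous'' to the rigid conclusion $\gamma\in\mathbb{Z}_{\ge0}$: this cannot be seen by potential theory alone and forces one to exploit the null-curve constraint $\partial_z\vec\Phi\boldsymbol\cdot\partial_z\vec\Phi=0$ together with the weighted $\bar\partial$-estimate for $\partial_z\vec\Phi$ — in effect a Bers--Vekua similarity-principle argument in a vector-valued, weighted setting, which is precisely the content of the M\"uller--Sv\v{e}r\'ak analysis and of its moving-frame reworkings by H\'elein, Rivi\`ere and Kuwert--Li.
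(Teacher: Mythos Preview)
The paper does not give its own proof of this proposition: it is stated as a known result and attributed to M\"uller--\v{S}ver\'ak, with alternative proofs via H\'elein's moving frames (Rivi\`ere, Kuwert--Li, Mondino--Rivi\`ere) cited explicitly. Your sketch is a faithful outline of precisely those arguments --- the capacity/elliptic-regularity step for $W^{2,2}$, the Wente\,$+$\,B\^ocher decomposition of $\lambda$, and the integrality of $\gamma$ via the almost-holomorphic null curve $\partial_z\vec\Phi$ --- and you correctly isolate the integrality step as the non-soft part that forces one into the M\"uller--\v{S}ver\'ak / moving-frame machinery. There is nothing to compare against in the paper itself; your proposal is consistent with, and indeed summarises, the references the authors invoke.
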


By integrating the singular Liouville equation \eqref{eq:PDElamb}, we obtain the Gauss-Bonnet Theorem for weak  branched immersions:
\begin{equation}\label{eq:GaussBonnetBranchedImm}
\int_{\Sigma} K_{\vec \Phi} d\mu_{g}= 2\pi \chi(\Sigma) + 2\pi   \sum_{j=1} ^{N} (n_j-1),
\end{equation}
where  $\chi(\Sigma)$ is the Euler Characteristic of $\Sigma$.
\\Note in particular that, once the topology of $\Sigma$ is fixed, the number of branch points counted with multiplicity is bounded by the Willmore energy:
\begin{equation}\label{eq:ControlBranchPoints}
2\pi \sum_{j=1} ^{N} (n_j-1)=\int_{\Sigma} K_{\vec \Phi} d\mu_{g} - 2\pi \chi(\Sigma) \leq 2  \int_{\Sigma} H^{2} d\mu_{g} - 2\pi \chi(\Sigma),
\end{equation}
Moreover, the Willmore energy controls the $L^{2}$ norm squared of the second fundamental form:
\begin{equation}\label{eq:ControlL2II}
 \int_{ \Sigma} | \vec{\mathbb I}|^{2} \, d\mu_{g} =  4 \int_{ \Sigma} H^{2} \, d\mu_{g}  - 2   \int_{ \Sigma}  K_{\vec \Phi} d\mu_{g}   \leq    4 \int_{ \Sigma} H^{2} \, d\mu_{g} -  4\pi \chi(\Sigma).
\end{equation}

\subsubsection{Simon's monotonicity formula and Li-Yau inequality for weak branched immersions}
Let $\vec \Phi \in \mathcal F_{\Sigma}$ be any weak branched conformal immersion with finite total curvature and branch points $\{b_1, \ldots, b_N\}$. In the usual way (by splitting the vector field in its tangential and normal parts and using integration by parts) one shows 
\begin{equation} \label{pre:variational_formula} 
	\int_{\Sigma} \Div_{\vec \Phi} \vec X \, d\mu_{\vec \Phi} = - 2 \int_{\Sigma} \vec X \boldsymbol{\cdot} \vec H_{\vec \Phi} \, d\mu_{\vec \Phi}
\end{equation} 
whenever $\vec X \in W^{1,2}(\Sigma, \mathbb R^3)$ has compact support in $\Sigma \setminus \{b_1, \ldots, b_N\}$, where in a local chart $x$,
\begin{equation*}
	\Div_{\vec \Phi} \vec X := g^{ij}\partial_{x^i} \vec X \boldsymbol{\cdot} \partial_{x^j} \vec \Phi. 
\end{equation*}
A simple cut-off argument together with \eqref{pre:L2curvature} shows that the first variation formula \eqref{pre:variational_formula} is true for all $\vec X \in  W^{1,2}(\Sigma, \mathbb R^3)$. In the following we will gather a couple of facts that are well known for weak unbranched immersions and, due to \eqref{pre:variational_formula}, are also valid for weak branched conformal immersions with finite total curvature. Firstly, letting $\vec X(p) := \vec \Phi(p) - \vec \Phi(a_0)$ for $p \in \Sigma$ and some fixed $a_0 \in \Sigma$, one has $\Div_{\vec \Phi} \vec X = 2$ and hence, see Simon \cite[Lemma 1.1]{MR857667}
\begin{equation} \label{pre:diameter_bound}
	\sqrt{\area \vec \Phi} \leqslant \diam \vec \Phi[\Sigma] \sqrt{\int_{\Sigma} H_{\vec \Phi}^2 \, d\mu_{\vec \Phi}}.
\end{equation} 
The push forward measure $\mu := \vec \Phi_\# \mu_{\vec \Phi}$ of $\mu_{\vec \Phi}$ defines a $2$-dimensional integral varifold in $\mathbb R^3$ with multiplicity function $\theta^2(\mu, x) = \mathscr H^0 (\vec \Phi^{-1} \{x\})$ (here $\mathscr H^0$ denotes the $0$-dimensional Hausdorff measure, i.e. the counting measure) and approximate tangent space $T_x\mu = d\vec \Phi [T_p\Sigma]$ almost everywhere when $x = \vec \Phi(p)$. See Simon \cite[Chapter 4]{MR756417} for an introduction on varifolds and Kuwert and Li \cite[Section 2.2]{MR2928715} for the context of weak unbranched immersions. From \eqref{pre:variational_formula} and the co-area formula (see for instance \cite[Equation 12.7]{MR756417}), the first variation formula for the varifold $\mu$ becomes
\begin{equation} \label{pre:first_variation}
	\int \Div_{\mu} \phi \, d\mu = - 2 \int \phi \boldsymbol{\cdot} H_\mu \, d\mu \qquad \text{for } \phi \in C^1_c(\mathbb R^3, \mathbb R^3)
\end{equation}
where the weak mean curvature is almost everywhere given by
\begin{equation*}
	H_{\mu}(x) = 
	\begin{cases} 
		\frac{1}{\theta^2(\mu,x)} \sum_{p \in \vec \Phi^{-1}(x)} \vec H_{\vec \Phi}(p) & \text{if } \theta^2(\mu,x) > 0 \\ 0 & \text{else}. 
	\end{cases}
\end{equation*}
The first variation formula \eqref{pre:first_variation} leads to Simon's monotonicity formula \cite[17.4]{MR756417} which implies (see for instance Rivi\`ere \cite[Section 5.3]{MR3524220} or Kuwert and Sch\"atzle \cite[Appendix]{MR2119722}) the Li-Yau inequality \cite[Theorem 6]{MR674407}
\begin{equation} \label{pre:Li-Yau-inequality}
	\theta^2(\mu, x) \leqslant \frac{1}{4\pi} \int_{\Sigma} H_{\vec \Phi}^2 \, d\mu_{\vec \Phi}. 
\end{equation}
Consequently,
\begin{equation} \label{pre:lower_bound_Willmore}
	\inf_{\vec{\Phi} \in \mathcal{F}_\Sigma} \int_{\Sigma} H_{\vec \Phi}^2 \, d\mu_{\vec \Phi} \geqslant 4\pi.
\end{equation}
Moreover, if $\vec{\Phi} \in \mathcal{F}_\Sigma$ with $\int_{\Sigma} H_{\vec \Phi}^2 \, d\mu_{\vec \Phi} < 8\pi$, then $\vec \Phi$ is an embedding (compare also with Proposition \ref{prop:SingPoints}).

\subsection{Canham-Helfrich energy}
Given real numbers $c_0 \in \mathbb R$ and $\alpha, \rho \geqslant 0$ as well as a weak branched conformal immersion with finite total curvature $\vec \Phi: \Sigma \to \mathbb R^3$, we define the Canham-Helfrich energy $\mathcal{H}^{c_0}_{\alpha, \rho}(\vec \Phi)$ in its most general form by 
\begin{equation} \label{pre:CH-energy}
\mathcal{H}^{c_0}_{\alpha, \rho}(\vec \Phi) := \int_\Sigma (H_{\vec \Phi} - c_0)^2 \, d\mu_{\vec \Phi} + \alpha \int_\Sigma 1 \, d\mu_{\vec \Phi} + \rho \int_\Sigma \vec n_{\vec \Phi} \boldsymbol \cdot \vec \Phi \, d\mu_{\vec \Phi}.
\end{equation}
Note that, in case $\vec \Phi: \Sigma \to \mathbb R^3$ is a smooth (actually Lipschitz is enough) embedding, by the Divergence Theorem the last integral equals the volume enclosed by $\vec \Phi(\Sigma)$. 
\\The parameter $\alpha$ is referred to as \emph{tensile stress}, $\rho$ as \emph{osmotic pressure}. Compare this definition for instance with \cite[Equation (3.6)]{MR3518329} or \cite{MR3757086}.

\section{Existence of minimisers}
\label{sec:existence}
In this chapter we will prove compactness of sequences with uniformly bounded Willmore energy and area as well as lower semi-continuity of the Canham-Helfrich energy under this convergence, see Theorem \ref{thm:compactness_and_lsc}. 
The proof of Theorem \ref{thm:compactness_and_lsc} will build on top of \cite{MR3276119}  and the next Lemma \ref{lem:convergence_outsidecon_centration_points}  which establishes the convergence of the constraints and the lower semi-continuity of the Willmore energy away from the branch points (Lemma  \ref{lem:convergence_outsidecon_centration_points} should be compared with \cite[Lemma 5.2]{MR3524220}). 
\begin{lemma}[Convergence outside the branch points]
	\label{lem:convergence_outsidecon_centration_points}
	Suppose $\vec \xi_1, \vec \xi_2,\ldots \in \mathcal F_{\mathbb S^2}$ is a sequence of weak branched conformal immersions with finite total curvature of the $2$-sphere~$\mathbb S^2$ into~$\mathbb R^3$, $\mu_1,\mu_2,\ldots$ are the corresponding Radon measures on $\mathbb S^2$, $\vec n_1, \vec n_2, \ldots$ are the corresponding Gauss maps, 
	\begin{equation} \label{cocp:uniformly_bounded_energy}
		\sup_{k \in \mathbb N}\int_{\mathbb S^2}|d \vec n_k|^2\,d\mu_k< \infty,
	\end{equation}
	there exists $\vec \xi_\infty \in \mathcal F_{\mathbb S^2}$, a positive integer $N$, and $b_1,\ldots,b_N \in \mathbb S^2$ such that
	\begin{gather}
		\label{cocp:uniformly_bounded_conformal_factor}
		\sup_{k \in \mathbb N} \|\log|d \vec \xi_k|\|_{L^\infty_{\mathrm{loc}}(\mathbb S^2\setminus\{b_1,\ldots,b_N\})} < \infty, \\ 
		\label{cocp:weak_convergence_hypothesis}
		\vec \xi_k \rightharpoonup \vec \xi_\infty \qquad \text{ as }k\to\infty\text{ weakly in } W^{2,2}_{\mathrm{loc}}(\mathbb S^2\setminus\{b_1,\ldots,b_N\},\mathbb R^3).
	\end{gather}
	Then, there exists a sequence of positive numbers $s_1,s_2,\ldots$ converging to zero such that 
	\begin{align}
		\label{cocp:convergence_area}
		\int_{\mathbb S^2} 1 \, d\mu_\infty & = \lim_{k\to\infty}\int_{\mathbb S^2\setminus \bigcup_{i=1}^NB_{s_k}(b_i)} 1 \, d\mu_k \\
		\label{cocp:convergence_mean_curvature}
		\int_{\mathbb S^2} H_{\infty}\,d\mu_\infty & = \lim_{k\to\infty}\int_{\mathbb S^2\setminus \bigcup_{i=1}^NB_{s_k}(b_i)}H_{k}\,d\mu_k \\ 
		\label{cocp:convergence_volume}
		\int_{\mathbb S^2} \vec n_ \infty \boldsymbol{\cdot} \vec \xi_ \infty \,d\mu_ \infty & = \lim_{k\to\infty} \int_{\mathbb S^2 \setminus \bigcup_{i=1}^NB_{s_k}(b_i)} \vec n_k \boldsymbol{\cdot} \vec \xi_k \,d\mu_k
	\end{align}
	where the balls are taken with respect to the geodesic distance on the standard $\mathbb S^2$, $\mu_ \infty$~and~$\vec n_\infty$ are the Radon measure and the Gauss map corresponding to $\vec \xi_ \infty$, and the $H_k$'s and $H_\infty$ are the mean curvatures corresponding to the $\vec \xi_k$'s and $\vec \xi_\infty$. Equations \eqref{cocp:convergence_area}--\eqref{cocp:convergence_volume} remain valid for $s_k$ replaced by any sequence~$t_k$ converging to zero and satisfying $t_k \geqslant s_k$, for all $k \in \mathbb N$. 
	
	Moreover, for any sequence $s_1,s_2, \ldots$ of positive numbers converging to zero, there exists a sequence $t_k \geqslant s_k$ converging to zero such that
	\begin{equation}
		\label{cocp:lsc-Willmore_energy}
		\int_{\mathbb S^2} H_{\infty}^2\,d\mu_\infty \leqslant \liminf_{k \to \infty}\int_{\mathbb S^2\setminus \bigcup_{i=1}^NB_{t_k}(b_i)} H_{k}^2\,d\mu_k. 
	\end{equation}
\end{lemma}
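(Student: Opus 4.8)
The plan is to localise everything near and away from the branch points, using the uniform bound \eqref{cocp:uniformly_bounded_conformal_factor} on the conformal factor away from $\{b_1,\ldots,b_N\}$ to transfer the weak $W^{2,2}_{\mathrm{loc}}$ convergence into strong convergence of the relevant geometric quantities on compact subsets of $\mathbb S^2 \setminus \{b_1,\ldots,b_N\}$. First I would fix a small radius $\delta > 0$ and work on $\Omega_\delta := \mathbb S^2 \setminus \bigcup_{i=1}^N B_\delta(b_i)$. On $\Omega_\delta$, \eqref{cocp:uniformly_bounded_conformal_factor} gives a two-sided bound $c(\delta)^{-1} \le |d\vec\xi_k| \le c(\delta)$, so that the metrics $\mu_k$ are uniformly comparable to the standard measure on $\Omega_\delta$; combined with \eqref{cocp:weak_convergence_hypothesis} and Rellich--Kondrachov, we get $\vec\xi_k \to \vec\xi_\infty$ strongly in $W^{1,p}(\Omega_\delta)$ for every $p<\infty$ and in $C^0(\Omega_\delta)$, while $\nabla^2\vec\xi_k \rightharpoonup \nabla^2\vec\xi_\infty$ weakly in $L^2(\Omega_\delta)$. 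Since $d\mu_k = |\partial_{x^1}\vec\xi_k \times \partial_{x^2}\vec\xi_k|\,dx$ in conformal coordinates depends continuously on $\nabla\vec\xi_k$, and $H_k\,d\mu_k$ is (in conformal coordinates) a universal algebraic expression in $\nabla\vec\xi_k$ and $\nabla^2\vec\xi_k$ that is \emph{linear} in the second derivatives, the products $\nabla^2\vec\xi_k \cdot (\text{strongly convergent terms})$ converge weakly; this yields $\int_{\Omega_\delta} d\mu_k \to \int_{\Omega_\delta} d\mu_\infty$, $\int_{\Omega_\delta} H_k\,d\mu_k \to \int_{\Omega_\delta} H_\infty\,d\mu_\infty$, and $\int_{\Omega_\delta} \vec n_k \boldsymbol\cdot \vec\xi_k\,d\mu_k \to \int_{\Omega_\delta} \vec n_\infty \boldsymbol\cdot \vec\xi_\infty\,d\mu_\infty$ (here the Gauss map $\vec n_k = (\partial_{x^1}\vec\xi_k \times \partial_{x^2}\vec\xi_k)/|\cdots|$ converges strongly in $W^{1,2}(\Omega_\delta)$ hence in every $L^p$).

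Next I would handle the annular regions $A_{k,i} := B_\delta(b_i)\setminus B_{s_k}(b_i)$ and the small caps $B_{s_k}(b_i)$. Because $\vec\xi_\infty \in \mathcal F_{\mathbb S^2}$ has finite total curvature, Proposition \ref{prop:SingPoints} gives $\vec\xi_\infty \in W^{2,2}(\mathbb S^2)$ and $\mu_\infty(\mathbb S^2) < \infty$, $H_\infty \in L^2(\mathbb S^2)$, so $\int_{B_\delta(b_i)} d\mu_\infty$, $\int_{B_\delta(b_i)} |H_\infty|\,d\mu_\infty$ and $\int_{B_\delta(b_i)} |\vec n_\infty \boldsymbol\cdot \vec\xi_\infty|\,d\mu_\infty$ all tend to $0$ as $\delta \to 0$ (absolute continuity of the integral). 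For the sequence side near $b_i$: the uniform bound \eqref{cocp:uniformly_bounded_energy} controls $\int_{\mathbb S^2}|d\vec n_k|^2\,d\mu_k$ and, via \eqref{eq:ControlL2II}--\eqref{eq:GaussBonnetBranchedImm} and the Gauss equation, also $\int_{\mathbb S^2} H_k^2\,d\mu_k$ uniformly; together with Simon's monotonicity formula (equivalently the density bound from \eqref{pre:Li-Yau-inequality}) applied in small balls, one gets the standard estimate $\mu_k(B_r(\vec\xi_k(b_i)))$, and hence $\mu_k(B_r(b_i))$, bounded by $C r^2 + C\int_{B_r} |\vec n_k - \text{const}|^2$ type quantities, so $\mu_k$ of a small intrinsic ball is small \emph{uniformly in $k$} up to a term controlled by the curvature concentrated there. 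The precise mechanism — a diagonal choice of $s_k \to 0$ and $\delta = \delta_k \to 0$ (slowly) so that the "bad" mass $\mu_k\big(\bigcup_i B_{\delta_k}(b_i)\big)$ and the corresponding $H_k$- and volume-integrals over it go to zero — is exactly what produces the sequence $s_k$ in the statement and the monotonicity "for all $t_k \ge s_k$": enlarging the excised balls only removes mass that is already going to zero. I would extract $s_k$ as the slowest such sequence making all three annular remainders vanish.

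For the lower-semicontinuity \eqref{cocp:lsc-Willmore_energy}, given any $s_k \to 0$ I would choose $t_k \ge s_k$ with $t_k \to 0$ but $t_k$ still "fast" enough that $\mathbb S^2 \setminus \bigcup_i B_{t_k}(b_i)$ exhausts $\mathbb S^2\setminus\{b_1,\ldots,b_N\}$; for fixed $\delta$ and $k$ large, $B_\delta(b_i) \supset B_{t_k}(b_i)$, and on the fixed region $\Omega_\delta$ weak $L^2$ convergence of $H_k\sqrt{d\mu_k}$ (which follows from the strong $W^{1,p}$ convergence of $\vec\xi_k$ and weak $L^2$ convergence of $\nabla^2\vec\xi_k$, writing $H_k$ in conformal coordinates) gives $\int_{\Omega_\delta} H_\infty^2\,d\mu_\infty \le \liminf_k \int_{\Omega_\delta} H_k^2\,d\mu_k \le \liminf_k \int_{\mathbb S^2\setminus\bigcup_i B_{t_k}(b_i)} H_k^2\,d\mu_k$ once $t_k < \delta$. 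Then let $\delta \to 0$ on the left using $\int_{\Omega_\delta} H_\infty^2\,d\mu_\infty \to \int_{\mathbb S^2} H_\infty^2\,d\mu_\infty$, and finally diagonalise in $(\delta, k)$ to produce a single sequence $t_k$; a monotone-in-$k$ choice of $t_k$ keeps $t_k \ge s_k$.

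The main obstacle is the interplay between the two excised scales: one must show that the mass, the total mean curvature, and the "volume" integrand $\vec n_k \boldsymbol\cdot \vec\xi_k$ carried by the small caps $B_{s_k}(b_i)$ all vanish \emph{simultaneously} along a single sequence $s_k \to 0$, using only the uniform Willmore-plus-area bound and no lower bound on $|d\vec\xi_k|$ near $b_i$ (the conformal factor may degenerate there). This is precisely the delicate "neck analysis" of \cite{MR3276119}: Simon's monotonicity formula furnishes a uniform density upper bound via \eqref{pre:Li-Yau-inequality}, and one exploits that the curvature concentrated in a shrinking neighbourhood of each $b_i$ is small (no mass of $|d\vec n_k|^2\,d\mu_k$ is lost — this is the "no neck energy" statement), which forces the areas of the necks to shrink. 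I expect the volume integrand to require a little extra care since $\vec n_k \boldsymbol\cdot \vec\xi_k$ is not bounded below in sign, but $|\vec n_k \boldsymbol\cdot \vec\xi_k| \le |\vec\xi_k| \le \diam \vec\xi_k[\mathbb S^2] + |\vec\xi_k(a_0)|$ is bounded (by the $W^{1,\infty}$ bound and \eqref{pre:diameter_bound} together with the area and Willmore bounds), so it is dominated by the area of the neck and the argument closes.
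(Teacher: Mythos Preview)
Your local analysis on compact subsets $\Omega_\delta$ of $\mathbb S^2 \setminus \{b_1,\ldots,b_N\}$ is correct and matches the paper: Rellich--Kondrachov gives strong $W^{1,p}$ convergence of $\vec\xi_k$, hence strong convergence of the area densities $e^{2\lambda_k}$ and of $\vec n_k$, while $\vec H_k e^{2\lambda_k} = \tfrac12 \Delta\vec\xi_k$ converges weakly in $L^2$; this delivers \eqref{cocp:convergence_area}--\eqref{cocp:convergence_volume} on each fixed $\Omega_\delta$, and weak $L^2$ convergence of $\vec H_k e^{\lambda_k}$ gives the lower semicontinuity on $\Omega_\delta$.

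However, you misidentify the remaining obstacle. You claim one must show that the mass, total mean curvature, and volume integrand carried by the caps $B_{s_k}(b_i)$ all vanish along some $s_k \to 0$, and you invoke Simon's monotonicity and the neck analysis of \cite{MR3276119} to achieve this. This is both unnecessary and, under the hypotheses of the lemma alone, \emph{false in general}: nothing in \eqref{cocp:uniformly_bounded_energy}--\eqref{cocp:weak_convergence_hypothesis} prevents $\mu_k$-mass (or Willmore energy) from concentrating at the points $b_i$ --- that is precisely why they are excised. The lemma makes no claim about $\mu_k\bigl(\bigcup_i B_{s_k}(b_i)\bigr)$; it asserts convergence of the integrals over the \emph{complement}. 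Note also that Li--Yau controls the density of the push-forward varifold at points of $\mathbb R^3$, not $\mu_k$ of intrinsic geodesic balls on $\mathbb S^2$; without a lower bound on $|d\vec\xi_k|$ near $b_i$ (which you correctly observe is unavailable) there is no way to translate between the two.

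The paper's route is much simpler. Having shown, for each fixed $\delta$, that $\int_{K_\delta}|e^{2\lambda_k}-e^{2\lambda_\infty}|\,d\mathscr L^2 \to 0$, one runs a plain diagonal extraction: pick any $r_j\to 0$, choose $n_1<n_2<\cdots$ so that this quantity is at most $1/j$ on $K_{r_j}$ for all $k\ge n_j$, and set $s_k := r_j$ for $n_{j-1}<k\le n_j$. Then $\int_{K_{s_k}}1\,d\mu_k = \int_{K_{s_k}}1\,d\mu_\infty + o(1)$, and the latter tends to $\int_{\mathbb S^2}1\,d\mu_\infty$ by dominated convergence since $\mu_\infty$ is finite and $s_k\to 0$. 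Stability under $t_k\ge s_k$ is immediate for the area (the $L^1$ estimate is monotone in the domain), and for $\int H_k\,d\mu_k$ it follows from H\"older: $\bigl|\int_{K_{s_k}\setminus K_{t_k}} H_k\,d\mu_k\bigr| \le \bigl(\int H_k^2\,d\mu_k\bigr)^{1/2}\mu_k(K_{s_k}\setminus K_{t_k})^{1/2}$, where the second factor tends to zero by the area convergence just established together with $\mu_\infty\bigl(\bigcup_i B_{t_k}(b_i)\bigr)\to 0$. No monotonicity formula, no neck analysis.
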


\begin{proof}
	Suppose $U$ is an open subset of $\mathbb S^2 \setminus \{b_1, \ldots, b_N\}$, $K$ is a compact subset of~$U$, and $x: U \to \mathbb R^2$ is a conformal chart for~$\mathbb S^2$. Denote by 
	\begin{equation*}
		\lambda_k = \log |\partial_{x^1} \vec \xi_k|, \qquad \lambda_\infty = \log |\partial_{x^1} \vec \xi_\infty| 
	\end{equation*}
	the conformal factors. Notice that the volume element corresponding to $\vec \xi_k$ is given by $e^{2\lambda_k}$. In a first step we will show that
	\begin{gather} \label{pcocp:convergence_area_element}
		e^{2\lambda_k} \to e^{2\lambda_\infty} \qquad \text{as } k \to \infty \text{ in }L^p(x[K]), \\ 
		\label{pcocp:convergence_volume}
		\vec n_k \boldsymbol{\cdot} \vec \xi_k e^{2\lambda_k} \to \vec n_ \infty \boldsymbol{\cdot} \vec \xi_ \infty e^{2\lambda_\infty} \qquad \text{as } k \to \infty \text{ in }L^p(x[K])
	\end{gather}
	for any $1 \leqslant p < \infty$, as well as
	\begin{align}
		\label{pcocp:convergence_mean_curvature}
		\int_{K} H_{\infty}\,d\mu_\infty & = \lim_{k\to\infty}\int_{K}H_{k}\,d\mu_k, \\ 
		\label{pcocp:lsc-Willmore_energy}
		\int_{K} H_{\infty}^2\,d\mu_\infty & \leqslant \liminf_{k \to \infty}\int_{K}H_{k}^2\,d\mu_k.
	\end{align}
	A simple argument by contradiction shows that it is enough to prove the statement after passing to a subsequence of~$k$. Since the $\vec \xi_k$'s and $\vec \xi_\infty$ are conformal and $x$ is a conformal chart, we can write the mean curvature vector as 
	\begin{equation*}
		2\vec{H}_{k} = e^{-2\lambda_k}\Delta \vec \xi_k, \qquad 2\vec{H}_{\infty} = e^{-2\lambda_\infty}\Delta \vec \xi_\infty  
	\end{equation*}
	where $\Delta$ is the flat Laplacian with respect to $x$. By Hypothesis \eqref{cocp:weak_convergence_hypothesis}, we have that 
	\begin{equation*} 
		\vec{H}_{k} e^{2\lambda_k} = \frac{1}{2}\Delta \vec \xi_k \rightharpoonup \frac{1}{2}\Delta \vec \xi_\infty = \vec{H}_{\infty} e^{2\lambda_\infty}
	\end{equation*}
	as $k \to \infty$ weakly in $L^2(x[K],\mathbb R^3)$, which implies \eqref{pcocp:convergence_mean_curvature}. 
	
	By the Rellich-Kondrachov Compactness Theorem, 
	after passing to a subsequence, there holds
	\begin{equation} \label{pcocp:strong_convergence}
		\partial_{x^1} \vec \xi_k \to \partial_{x^1} \vec \xi_\infty \qquad \text{as } k\to\infty\text{ in } L^p_{\mathrm{loc}}(x[U],\mathbb R^3) 
	\end{equation}
	for any $1 \leqslant p < \infty$. Therefore, using Hypothesis \eqref{cocp:uniformly_bounded_conformal_factor} and passing to a further subsequence, it follows
	\begin{equation*}
		e^{-\lambda_k} = |\partial_{x^1} \vec \xi_k|^{-1} \to |\partial_{x^1} \vec \xi_\infty|^{-1} = e^{-\lambda_\infty} \qquad \text{as }k\to \infty\text{ in }L^2(x[K]).
	\end{equation*}
	It follows
	\begin{equation*}
		\vec{H}_{k}\sqrt{e^{2\lambda_k}} = \frac{1}{2}e^{-\lambda_k}\Delta \vec \xi_k \rightharpoonup \frac{1}{2}e^{-\lambda_\infty}\Delta \vec \xi_\infty  = \vec{H}_{\infty}\sqrt{e^{2\lambda_k}}
	\end{equation*}
	as $k\to\infty$ weakly in $L^2(x[K], \mathbb R^3)$, which implies \eqref{pcocp:lsc-Willmore_energy} by lower semi-continuity of the $L^2$-norm under weak convergence.
	
	Similarly, from Hypothesis \eqref{cocp:uniformly_bounded_conformal_factor} and the strong convergence \eqref{pcocp:strong_convergence}, we infer \eqref{pcocp:convergence_area_element}. 
	
	Again by the strong convergence \eqref{pcocp:strong_convergence} and Hypothesis \eqref{cocp:uniformly_bounded_conformal_factor}, we can extract a subsequence such that by dominated convergence,
	\begin{equation*}
		\vec n_k = e^{-2\lambda_k} (\partial_{x^1} \vec \xi_k \times \partial_{x^2} \vec \xi_k) \to e^{-2\lambda_\infty} (\partial_{x^1} \vec \xi_\infty \times \partial_{x^2} \vec \xi_\infty) = \vec n_\infty 
	\end{equation*} 
	as $k \to \infty$ in $ L^p(x[K], \mathbb R^3)$ for any $1\leqslant p < \infty$. Using this and the fact that by the Rellich-Kondrachov Compactness Theorem
	\begin{equation*}
		\vec \xi_k \to \vec \xi_\infty \qquad \text{as } k \to \infty \text{ in } L^p(x[K], \mathbb R^3)
	\end{equation*}
	for any $1 \leqslant p < \infty$, one verifies \eqref{pcocp:convergence_volume}.
	
	Next, let $r_k$ be any sequence of positive numbers converging to zero and abbreviate 
	\begin{equation*}
		f_k = e^{2\lambda_k}, \qquad f_\infty = e^{2\lambda_\infty}, \qquad K_{r_k} =\mathbb S^2 \setminus \bigcup_{i=1}^NB_{r_k}(b_i).
	\end{equation*} 
	First, notice that for any Borel function $f$ on~$\mathbb S^2$ with $\int_{\mathbb S^2}|f|\,d\mu_\infty < \infty$, there holds 
	\begin{equation} \label{pcocp:limit-lhs}
		\lim_{k\to\infty} \int_{K_{r_k}}f \, d\mu_{\infty} = \int_{\mathbb S^2} f \,d\mu_{\infty}
	\end{equation}
	which is a consequence of the dominated convergence theorem and the fact that finite sets have $\mu_\infty$~measure zero. Let $n_0 = 1$. For each positive integer $j$, we use \eqref{pcocp:convergence_area_element} to inductively choose $n_j > n_{j-1}$ such that  
	\begin{equation*}
		\int_{x[K_{r_j}]}|f_k - f_\infty|\,d\mathscr L^2 \leqslant \frac{1}{j} \qquad \text{for all } k\geqslant n_j.
	\end{equation*}
	Moreover, define $l_k = j$ for all integers $k$ with $n_{j-1}< k \leqslant n_j$ and define $s_k = r_{l_k}$. Then, we have that $s_k \to 0$ as $k \to \infty$ as well as
	\begin{equation} \label{pcocp:L^1-convergence_area}
		\int_{x[K_{s_k}]} |f_k - f_\infty| \, d\mathscr L^2 \to 0 \qquad \text{as } k \to \infty
	\end{equation}
	which in particular remains valid for $s_k$ replaced by any $t_k \geqslant s_k$. Hence, by \eqref{pcocp:limit-lhs} we can deduce \eqref{cocp:convergence_area}.
	Using the convergence on compact sets \eqref{pcocp:convergence_volume}--\eqref{pcocp:lsc-Willmore_energy}, Equations \eqref{cocp:convergence_mean_curvature}--\eqref{cocp:lsc-Willmore_energy} follow similarly. It only remains to show that Equation \eqref{cocp:convergence_mean_curvature} is still valid after replacing $s_k$ by any sequence $t_k \geqslant s_k$ converging to zero. Hence, we only have to show that 
	\begin{equation*}
		\int_{K_{s_k} \setminus K_{t_k}} H_k \, d\mu_k \to 0 \qquad \text{as } k \to \infty.
	\end{equation*} 
	This follows as by H\"older's inequality
	\begin{equation*}
		\Bigl| \int_{K_{s_k} \setminus K_{t_k}} H_k \, d\mu_k \Bigr| \leqslant \Bigl(\int_{\mathbb S^2} H_k^2 \, d\mu_k \Bigr)^{1/2} \Bigl(\int_{K_{s_k} \setminus K_{t_k}} 1 \, d\mu_k \Bigr)^{1/2}.
	\end{equation*}
	The first factor on the right hand side is bounded by \eqref{cocp:uniformly_bounded_energy}. To see that the second factor goes to zero as $k$ tends to infinity, we apply \eqref{pcocp:L^1-convergence_area} and the fact that $\mu_{\infty}(\bigcup_{i = 1}^NB_{t_k}(b_i)) \to 0$ as $k \to \infty$.
\end{proof}

In the following we will define the notion of a bubble tree. The idea is that the different bubbles can be parametrised by decomposing a single $2$-sphere. The bubbles can then be attached to each other by a Lipschitz map, see \eqref{bt:parametrisation} and \eqref{bt:branch_point}.

\begin{definition}[Bubble tree of weak immersions, see \protect{\cite[Definition 7.1]{MR3276119}}] \label{def:bubble_tree}
	An $N+1$ tuple $\vec T = (\vec f, \vec \Phi^1, \ldots, \vec \Phi^N)$ is called a \emph{bubble tree of weak immersions} if and only if $N$ is a positive integer, $\vec f \in W^{1, \infty}(\mathbb S^2, \mathbb R^3)$, and $\vec \Phi^1, \ldots, \vec \Phi^N \in \mathcal F_{\mathbb S^2}$ are weak branched conformal immersions with finite total curvature such that the following holds. 
	
	There exist open geodesic balls $B^1, \ldots, B^N \subset \mathbb S^2$ such that 
	\begin{itemize}
		\item $\overline{B^1} = \mathbb S^2$ and for all $i \neq i'$ either $\overline{B^i} \subset B^{i'}$ or $\overline{B^{i'}} \subset B^i$. 
	\end{itemize}
	For all $i \in \{1, \ldots, N\}$ there exists a positive integer $N^i$ and disjoint open geodesic balls $B^{i,1}, \ldots, B^{i,N^i} \subset \mathbb S^2$ whose closures are included in $B^i$ such that
	\begin{itemize}
		\item for all $i' \neq i$ either $\overline{B^i} \subset B^{i'}$ or $\overline{B^{i'}} \subset B^{i,j}$ for some $j \in \{1, \ldots, N^i\}$.
	\end{itemize}
	For all $i \in \{1, \ldots, N\}$ there exist distinct points $b^{i,1}, \ldots, b^{i,N^i} \in \mathbb S^2$ and a Lipschitz diffeomorphism 
	\begin{equation*}
		\Xi^i: B^i \setminus \bigcup_{j = 1}^{N^i - 1} \overline{B^{i,j}} \to \mathbb S^2 \setminus \{b^{i,1}, \ldots, b^{i, N^i}\}
	\end{equation*} 
	which extends to a Lipschitz map
	\begin{equation*}
		\overline \Xi_i: \overline{B^i} \setminus \bigcup_{j=1}^{N^i - 1}B^{i,j} \to \mathbb S^2
	\end{equation*}
	such that
	\begin{equation*}
		\overline{\Xi}_i[\partial B^{i,j}] = b^{i,j} \text{ whenever } j \in \{1, \ldots, N^{i}-1\}, \qquad \overline \Xi_i[\partial B^i] = b^{i,N^i}.
	\end{equation*}
	Moreover, for all $i \in \{1, \ldots, N\}$,
	\begin{equation} \label{bt:parametrisation}
		\vec f(x) = (\vec \Phi^i \circ \Xi^i)(x) \qquad \text{whenever } x \in B^i \setminus \bigcup_{j = 1}^{N^{i}-1} \overline{B^{i,j}}
	\end{equation}
	and for all $j \in \{1, \ldots, N^i\}$ there exists $p^{i,j} \in \mathbb R^3$ such that 
	\begin{equation} \label{bt:branch_point}
		\vec f(x) = p^{i,j} \qquad \text{whenever } x \in B^{i,j} \setminus \bigcup_{i' \in J^{i,j}} \overline{B^{i'}}
	\end{equation}
	where $J^{i,j} = \{i': \overline{B^{i'}} \subset B^{i,j}\}$.
	
	Finally, we define
	\begin{gather*}
		\mathcal W(\vec T) := \sum_{i = 1}^N \int_{\mathbb S^2} H_{\vec \Phi^i}^2 \, d\mu_{\vec \Phi^i}, \qquad \area(\vec T) := \sum_{i = 1}^N \int_{\mathbb S^2} 1 \, d\mu_{\vec \Phi^i}, \\
		\vol(\vec T) := \sum_{i = 1}^N \int_{\mathbb S^2} \vec n_{\vec\Phi^i} \boldsymbol{\cdot} \vec \Phi^i \, d\mu_{\vec \Phi^i}. 
	\end{gather*}
\end{definition}

The next theorem establishes the weak closure of bubble trees, as well as the  convergence of  the constraints in the Helfrich problem and the lower semi-continuity of the Willmore energy.  The proof builds on top of \cite{MR3276119}.
\begin{theorem}[Weak closure and lower semi-continuity of bubble trees] \label{thm:compactness_and_lsc}
	Suppose $\vec T_k = (\vec f_k, \vec \Phi^1_k, \ldots, \vec \Phi^{N_k}_k)$ is a sequence of bubble trees of weak immersions and
	\begin{equation} \label{wc:uniformly_bounded_energy}
		\limsup_{k \to \infty} \sum_{i = 1}^{N_k} \int_{ \mathbb S^2 } 1 + |d\vec n_{\vec \Phi_k^i}|^2 \, d\mu_{\vec \Phi_k^i} < \infty, \qquad \liminf_{k \to \infty} \sum_{i = 1}^{N_k} \diam \vec \Phi_k^i[\mathbb S^2] > 0.
	\end{equation}
	
	Then, there exists a subsequence of $\vec T_k$ which we again denote by $\vec T_k$ such that $N_k = N$ for some positive integer $N$ and there exists a sequence of diffeomorphisms $\Psi_k$ of $\mathbb S^2$ such that
	\begin{gather*} 
		\vec f_k \circ \Psi_k \to \vec u_\infty \qquad \text{ as } k \to \infty \text{ uniformly in } C^0(\mathbb S^2, \mathbb R^3), \\
		\area \vec f_k[\mathbb S^2] \to \area \vec u_\infty[\mathbb S^2] \qquad \text{as } k \to \infty 
	\end{gather*}
	for some $\vec u_\infty \in W^{1, \infty}(\mathbb S^2, \mathbb R^3)$. Moreover, for all $i \in \{1,\ldots, N\}$ there exists a positive integer $Q^i$ and sequences $f_k^{i,1}, \ldots, f_k^{i,Q^i}$ of positive conformal diffeomorphisms of $\mathbb S^2$ such that for each $j \in \{1, \ldots, Q^i\}$ there exist finitely many points $b^{i,j,1}, \ldots b^{i,j,Q^{i,j}} \in \mathbb S^2$ with
	\begin{equation} \label{wc:weak_convergence}
		\vec \Phi^i_k \circ f_k^{i,j} \rightharpoonup \vec \xi^{i,j}_\infty \qquad \text{ as } k \to \infty \text{ weakly in } W_{\mathrm{loc}}^{2,2}(\mathbb S^2 \setminus \{b^{i,j,1}, \ldots b^{i,j,Q^{i,j}}\}, \mathbb R^3)
	\end{equation}
	for some branched Lipschitz conformal immersion $\vec \xi_\infty^{i,j} \in \mathcal{F}_{\mathbb S^2}$. Furthermore, 
	\begin{equation*}
		\vec T_\infty := \bigl(\vec u_\infty,(\vec \xi^{1,j}_\infty)_{j = 1, \ldots, Q^1}, \ldots, (\vec \xi_\infty^{N,j})_{j = 1, \ldots, Q^N}\bigr)
	\end{equation*}
	is a bubble tree of weak immersions and
	\begin{gather*}
		\mathcal W(\vec T_\infty) \leqslant \liminf_{k \to \infty} \mathcal W (\vec T_k), \quad \area (\vec T_\infty) = \lim_{k\to\infty} \area(\vec T_k), \quad \vol(\vec T_\infty) = \lim_{k\to\infty} \vol(\vec T_k)
	\end{gather*}
	as well as 
	\begin{equation*}
		\sum_{i = 1}^{N} \sum_{j = 1}^{Q^i} \int_{ \mathbb S^2 } H_{\vec \xi_\infty^{i,j}} \, d \mu_{\vec \xi_\infty^{i,j}} = \lim_{k\to\infty} \sum_{i = 1}^N \int_{ \mathbb S^2 } H_{\vec \Phi_k^i} \, d\mu_{\vec \Phi_k^i}. 
	\end{equation*}
\end{theorem}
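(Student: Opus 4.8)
The plan is to reduce everything to Theorem~\ref{thm:compactness}, applied to each of the finitely many bubbles of the trees $\vec T_k$ individually, and to Lemma~\ref{lem:convergence_outsidecon_centration_points}, applied to the rescaled bubbles it produces. Theorem~\ref{thm:compactness} already gives, for a single sequence of weak branched immersions with bounded total curvature and diameter bounded below, the limiting bubble tree together with the conservation of area; the genuinely new ingredient is Lemma~\ref{lem:convergence_outsidecon_centration_points}, from which I would extract the conservation of volume and of the total mean curvature, and the lower semi-continuity of $\mathcal W$. First I would pass to a subsequence so that $N_k=N$: by \eqref{pre:lower_bound_Willmore} and the pointwise inequality $|\vec{\mathbb I}|^2\geqslant 2H^2$ (which via \eqref{eq:ControlL2II} gives $\int_{\mathbb S^2}|d\vec n_{\vec\Phi}|^2\,d\mu_{\vec\Phi}=\int_{\mathbb S^2}|\vec{\mathbb I}|^2\,d\mu_{\vec\Phi}\geqslant 8\pi$ for every $\vec\Phi\in\mathcal F_{\mathbb S^2}$) each bubble contributes at least $8\pi$ to $\sum_i\int(1+|d\vec n|^2)\,d\mu$, so the first bound in \eqref{wc:uniformly_bounded_energy} forces $N_k$ bounded, hence eventually constant. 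After discarding those bubbles whose diameter tends to zero along the subsequence --- which by \eqref{pre:diameter_bound} and $\int H^2_{\vec\Phi^i_k}\,d\mu_{\vec\Phi^i_k}\leqslant\tfrac12\int|d\vec n_{\vec\Phi^i_k}|^2\,d\mu_{\vec\Phi^i_k}$ carry vanishing area, hence vanishing volume and total mean curvature, so can be dropped without affecting the limiting tree --- I may assume $\liminf_k\diam\vec\Phi^i_k[\mathbb S^2]>0$ for every remaining $i$. Then, for each fixed $i$, Theorem~\ref{thm:compactness} applied to $(\vec\Phi^i_k)_k$ produces, along a diagonal subsequence, the integer $Q^i$, the conformal diffeomorphisms $f^{i,1}_k,\dots,f^{i,Q^i}_k$, the points $b^{i,j,\ell}$, the limits $\vec\xi^{i,j}_\infty\in\mathcal F_{\mathbb S^2}$ with \eqref{wc:weak_convergence}, a uniformly convergent reparametrisation of $\vec\Phi^i_k$, and the area identity $\sum_j\int_{\mathbb S^2}1\,d\mu_{\vec\xi^{i,j}_\infty}=\lim_k\int_{\mathbb S^2}1\,d\mu_{\vec\Phi^i_k}$.

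Next I would assemble the limit. Glueing the $C^0$-limits of the reparametrised maps $\vec f_k$ over the balls $B^i$ of $\vec T_k$ defines $\Psi_k$ and $\vec u_\infty\in W^{1,\infty}(\mathbb S^2,\mathbb R^3)$ with $\vec f_k\circ\Psi_k\to\vec u_\infty$ in $C^0$ and $\area\vec f_k[\mathbb S^2]\to\area\vec u_\infty[\mathbb S^2]$; the nested families of geodesic balls of $\vec T_k$ together with the concentration domains produced inside each $\mathbb S^2$ by Theorem~\ref{thm:compactness} combine into a single nested family satisfying the axioms of Definition~\ref{def:bubble_tree}, and the collapse of the necks onto points gives \eqref{bt:branch_point}, so $\vec T_\infty=(\vec u_\infty,(\vec\xi^{i,j}_\infty)_{i,j})$ is a bubble tree; summing the area identities over the finitely many $i$ gives $\area(\vec T_\infty)=\lim_k\area(\vec T_k)$.

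It remains to prove the conservation of volume and total mean curvature and the lower semi-continuity of $\mathcal W$, for which I would invoke Lemma~\ref{lem:convergence_outsidecon_centration_points} on each sequence $(\vec\Phi^i_k\circ f^{i,j}_k)_k$ with branch points $\{b^{i,j,1},\dots,b^{i,j,Q^{i,j}}\}$: its hypothesis \eqref{cocp:uniformly_bounded_energy} holds because the total curvature is invariant under reparametrisation, \eqref{cocp:weak_convergence_hypothesis} is \eqref{wc:weak_convergence}, and the uniform bound \eqref{cocp:uniformly_bounded_conformal_factor} on the conformal factors is part of the quantitative bubbling analysis of~\cite{MR3276119} underlying Theorem~\ref{thm:compactness}. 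Setting $\Omega^{i,j}_k(r):=f^{i,j}_k\bigl[\mathbb S^2\setminus\bigcup_\ell B_r(b^{i,j,\ell})\bigr]$ and enlarging the radii $s^{i,j}_k\to 0$ furnished by the lemma (whose conclusions persist under such enlargement) so that the $\Omega^{i,j}_k(s^{i,j}_k)$ become pairwise disjoint, the neck set $\mathcal N^i_k:=\mathbb S^2\setminus\bigcup_j\Omega^{i,j}_k(s^{i,j}_k)$ satisfies $\mu_{\vec\Phi^i_k}(\mathcal N^i_k)=\int_{\mathbb S^2}1\,d\mu_{\vec\Phi^i_k}-\sum_j\mu_{\vec\Phi^i_k}(\Omega^{i,j}_k(s^{i,j}_k))$, which by \eqref{cocp:convergence_area} and the area identity of the previous paragraph tends to $0$. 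Since $\int_{\mathbb S^2}H^2_{\vec\Phi^i_k}\,d\mu_{\vec\Phi^i_k}$ and $\sup_{\mathbb S^2}|\vec\Phi^i_k|$ stay bounded, H\"older's inequality gives $\int_{\mathcal N^i_k}|H_{\vec\Phi^i_k}|\,d\mu_{\vec\Phi^i_k}\to 0$ and $\int_{\mathcal N^i_k}|\vec n_{\vec\Phi^i_k}\boldsymbol\cdot\vec\Phi^i_k|\,d\mu_{\vec\Phi^i_k}\to 0$, so adding \eqref{cocp:convergence_mean_curvature} and \eqref{cocp:convergence_volume} over the finitely many $j$, then over $i$, yields $\sum_{i,j}\int H_{\vec\xi^{i,j}_\infty}\,d\mu_{\vec\xi^{i,j}_\infty}=\lim_k\sum_i\int H_{\vec\Phi^i_k}\,d\mu_{\vec\Phi^i_k}$ and $\vol(\vec T_\infty)=\lim_k\vol(\vec T_k)$. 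For the Willmore energy, with radii $t^{i,j}_k\to 0$ from \eqref{cocp:lsc-Willmore_energy} (again arranging the $\Omega^{i,j}_k(t^{i,j}_k)$ disjoint) and discarding the necks,
\begin{equation*}
	\liminf_{k\to\infty}\int_{\mathbb S^2}H^2_{\vec\Phi^i_k}\,d\mu_{\vec\Phi^i_k}\geqslant\sum_{j}\liminf_{k\to\infty}\int_{\Omega^{i,j}_k(t^{i,j}_k)}H^2_{\vec\Phi^i_k}\,d\mu_{\vec\Phi^i_k}\geqslant\sum_{j}\int_{\mathbb S^2}H^2_{\vec\xi^{i,j}_\infty}\,d\mu_{\vec\xi^{i,j}_\infty},
\end{equation*}
and summing over $i$ gives $\mathcal W(\vec T_\infty)\leqslant\liminf_k\mathcal W(\vec T_k)$.

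The main obstacle will be this last step: upgrading the purely \emph{local} convergence statements of Lemma~\ref{lem:convergence_outsidecon_centration_points} (which control only the complements of shrinking balls about the branch points) to the \emph{global} conservation of area, volume and total mean curvature, i.e. showing that the neck regions $\mathcal N^i_k$ carry asymptotically none of these quantities; this relies crucially on the area identity coming from Theorem~\ref{thm:compactness} being an exact equality. A second, more technical, difficulty is to verify that the hypotheses of Lemma~\ref{lem:convergence_outsidecon_centration_points} --- above all the uniform $L^\infty_{\mathrm{loc}}$ control \eqref{cocp:uniformly_bounded_conformal_factor} of the conformal factors of the rescaled bubbles $\vec\Phi^i_k\circ f^{i,j}_k$ --- really follow from the $\varepsilon$-regularity and energy-quantisation results of~\cite{MR3276119}, and to organise the nested subsequences together with the neck radii $s^{i,j}_k$ and $t^{i,j}_k$ so that one choice works simultaneously for all $i$ and $j$ while keeping the domains $\Omega^{i,j}_k$ pairwise disjoint and exhausting $\mathbb S^2$ up to the necks.
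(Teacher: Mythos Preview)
Your proposal is correct and follows essentially the same route as the paper: apply Theorem~\ref{thm:compactness} to each bubble for compactness and the area identity, invoke Lemma~\ref{lem:convergence_outsidecon_centration_points} for the local convergence outside concentration points, and control the neck contributions to the total mean curvature and volume by H\"older's inequality while simply discarding the (non-negative) neck term for the Willmore lower semi-continuity. The only minor difference is that the paper pulls the vanishing of the neck area directly from the Domain Decomposition Lemma~\cite[Theorem~6.1]{MR3276119} (equation~\eqref{pwc:neck_condition} in the proof), whereas you deduce it from the exactness of the area identity in Theorem~\ref{thm:compactness} combined with~\eqref{cocp:convergence_area}; both arguments are valid.
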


\begin{proof}
	We first consider the special case where $N_k = 1$ for all positive integers $k$. By \cite[Theorem 1.5]{MR3276119}, it then only remains to show the convergence properties of the Willmore energy~$\mathcal W$, the volume, and the integral of the mean curvature. In view of Lemma \ref{lem:convergence_outsidecon_centration_points}, we can add Equations \eqref{cocp:convergence_mean_curvature}--\eqref{cocp:lsc-Willmore_energy} for $\vec \xi_k$ replaced by $\vec \Phi_k \circ f_k^i$ to the conclusion of the Domain Decomposition Lemma \cite[Theorem 6.1]{MR3276119}. Therefore, adapting the proof of \cite[Theorem 1.5]{MR3276119}, we get the following statement. 
	 
	After passing to a subsequence and denoting $\vec \Phi_k := \vec \Phi_k^1$, there exists a positive integer $N$, sequences $f_k^1, \ldots, f_k^N$ of positive conformal diffeomorphisms of $\mathbb S^2$, and for each $i \in \{1, \ldots, N\}$ there exist points $b^{i,1}, \ldots, b^{i,N^i} \in \mathbb S^2$ such that \eqref{wc:weak_convergence} and \eqref{intro:continuous_convergence} hold. Moreover, there exists a sequence of positive numbers $s_k$ converging to zero such that for $i = 1, \ldots, N$ Equations \eqref{cocp:convergence_area}--\eqref{cocp:lsc-Willmore_energy} are satisfied for $\vec \xi_k$ replaced by $\vec \Phi_k \circ f^i_k$. Furthermore, defining
	\begin{equation*}
		S^i_k := \mathbb S^2 \setminus \bigcup_{j=1}^{N^i} B_{s_k}(b^{i,j}),
	\end{equation*} 
	and for $j = 1, \ldots, N^i$ the sets of indices 
	\begin{equation*}
		J^{i,j} := \{i': \forall k \in \mathbb N: \bigl((f_k^i)^{-1} \circ f_k^{i'}\bigr)[S_k^{i'}]\subset B_{s_k}(b^{i,j})\},
	\end{equation*}
	and 
	\begin{equation*}
		\hat{J}^{i,j}:= \{i' \in J^{i,j}: \forall k \in \mathbb N: \nexists i'':\bigl((f_k^i)^{-1} \circ f_k^{i'}\bigr)[S_k^{i'}]\subset \conv \bigl((f_k^i)^{-1} \circ f_k^{i''}\bigr)[S_k^{i''}]\},
	\end{equation*}
	and the necks 
	\begin{equation*}
		S_k^{i,j} := B_{s_k}(b^{i,j}) \setminus \bigcup_{i' \in \hat J^{i,j}}\bigl((f^i_k)^{-1} \circ f^{i'}_k\bigr)\bigl[\mathbb S^2 \setminus B_{s_k}(b^{i',N^{i'}})\bigr],
	\end{equation*}
	there holds
	\begin{equation} \label{pwc:neck_condition}
		\lim_{k\to\infty} \int_{S_k^{i,j}} 1 \, d\mu_{\vec \Phi_k \circ f^i_k} = 0, \qquad \lim_{k\to\infty} \diam (\vec \Phi_k \circ f^i_k)[S^{i,j}_k] = 0.
	\end{equation}
	Finally, for any $\mu_{\vec \Phi_k}$ integrable Borel function $\varphi$ on $\mathbb S^2$, we get
	\begin{equation} \label{pwc:integral_identity}
		\int_{ \mathbb S^2 } \varphi \, d\mu_{\vec\Phi_k} = \sum_{i = 1}^N \int_{\mathbb S^2\setminus\bigcup_{j=1}^{N_i}B_{s_k}(b^{i,j})}\varphi \circ f^i_k\,d\mu_{\vec \Phi_k\circ f^i_k} + \sum_{i = 1}^N\sum_{j = 1}^{N^i-1}\int_{S_k^{i,j}}\varphi\circ f^i_k\,d\mu_{\vec \Phi_k\circ f^i_k}.
	\end{equation}
	We notice that by the strong convergence \eqref{intro:continuous_convergence},  
	\begin{equation*}
	\sup_{k \in \mathbb N} \sup_{S_k^{i,j}}|\vec \Phi_k \circ f^i_k| \leqslant C < \infty
	\end{equation*}
	for some finite number $C > 0$. Hence, by H\"older's inequality 
	\begin{gather*}
		\Bigl|\int_{S_k^{i,j}}H_{\vec \Phi_k\circ f^i_k}\,d\mu_{\vec \Phi_k\circ f^i_k}\Bigr| \leqslant \Bigl(\int_{S_k^{i,j}}1\,d\mu_{\vec \Phi_k\circ f^i_k}\Bigr)^{1/2}\Bigl(\int_{\mathbb S^2}H_{\vec \Phi_k \circ f_k^i}^2\,d\mu_{\vec \Phi_k \circ f_k^i}\Bigr)^{1/2}, \\
		\Bigl|\int_{S_k^{i,j}}\vec n_{\vec \Phi_k\circ f^i_k} \boldsymbol{\cdot} (\vec \Phi_k\circ f^i_k)\,d\mu_{\vec \Phi_k\circ f^i_k}\Bigr| \leqslant \Bigl(
		\int_{S_k^{i,j}}1\,d\mu_{\vec \Phi_k\circ f^i_k} \Bigr)^{1/2} \Bigl(\int_{S_k^{i,j}}C^2\,d\mu_{\vec \Phi_k\circ f^i_k} \Bigr)^{1/2}.
	\end{gather*}
	By  \eqref{wc:uniformly_bounded_energy} and \eqref{pwc:neck_condition}, the right hand side of each line goes to zero as $k$ tends to infinity. That means the last term of Equation \eqref{pwc:integral_identity} goes to zero as $k$ tends to infinity when $\varphi$ is replaced by $H_{\vec \Phi_k}$ as well as when $\varphi$ is replaced by $\vec n_{\vec \Phi_k} \boldsymbol \cdot \vec \Phi_k$. Therefore, using \eqref{cocp:convergence_mean_curvature} and \eqref{cocp:convergence_volume}, we can conclude the convergence of the integrated mean curvature and the convergence of the volume from \eqref{pwc:integral_identity}. Similarly, we can conclude the lower semi-continuity of the Willmore energy $\mathcal W$ from \eqref{pwc:integral_identity} by replacing $\varphi$ with $H_{\vec \Phi_k}^2$, using super linearity of the limit inferior and by ignoring the non-negative second term in \eqref{pwc:integral_identity}.
	
	Now, the general case follows analogously to the proof of \cite[Theorem 7.2]{MR3276119}.
\end{proof}

\section{Regularity of minimisers}
\label{sec:regularity}

Throughout this section, $\Sigma$ denotes a smooth, oriented, and closed $2$-dimensional manifold. Moreover, $c_0$, $\alpha$, and $\rho$ are the parameters of the Canham-Helfrich energy, i.e. $c_0 \in \mathbb R$ and $\alpha, \rho \geqslant 0$, see \eqref{pre:CH-energy}. A (possibly branched) weak immersion $\vec \Phi \in \mathcal F_\Sigma$ with branch points $\{b_{1}, \ldots, b_{N}\}$ is called \emph{weak Canham-Helfrich immersion} if
\begin{equation} \label{CHimmersion}
	\left.\frac{d}{dt}\right|_{t = 0} \mathcal H^{c_0}_{\alpha,\rho}(\vec \Phi + t\vec \omega) = 0
\end{equation}
for all $\vec \omega \in C_{c}^\infty(\Sigma \setminus \{b_{1}, \ldots, b_{N}\}, \mathbb R^3)$.

In the following, we will first compute the Canham-Helfrich equation in divergence form, see Lemma \ref{lem:Canham-Helfrich_equation}. Then, we will prove that a weak immersion satisfying the Canham-Helfrich equation is smooth away from its branch points, see Theorem \ref{thm:smoothness}. The proof is based on the regularity theory for weak Willmore immersions developed by Rivi\`ere \cite{MR2430975, MR3524220}. An important step in Riviere's regularity theory is the discovery of hidden conservation laws for weak Willmore immersions. In the framework of Canham-Helfich immersions, the corresponding hidden conservation laws were discovered by Bernard \cite{MR3518329}.

\begin{lemma}[Canham-Helfrich Euler-Lagrange equation in divergence form] \label{lem:Canham-Helfrich_equation}
	Suppose $\vec \Phi \in \mathcal F_\Sigma$ is a weak Canham-Helfrich immersion with branch points $\{b_1, \ldots, b_N\}$. Then, away from its branch points, i.e. in conformal parametrisations from the open unit disk $D^2$ into a subset of $\Sigma \setminus \bigcup_{i = 1}^N B_\varepsilon (b_i)$ for any $\varepsilon > 0$, there holds 
	\begin{equation} \label{CHEL}
		\vec{\mathcal{W}} = -\Div \left[c_0 \nabla \vec n + (2c_0H - c_0^2 - \alpha) \nabla \vec \Phi - \frac{\rho}{2} \vec \Phi \times \nabla^\bot \vec \Phi \right] 
	\end{equation} 
	in $\mathcal D'(D^2, \mathbb R^3)$, where 
	\begin{equation} \label{Willmore_equation}
		\vec{\mathcal W} := \Div \frac{1}{2}\left[2\nabla \vec H - 3H\nabla \vec n + \vec H \times \nabla^\bot \vec n\right]
	\end{equation}
	corresponds to the first variation of the Willmore energy. 
\end{lemma}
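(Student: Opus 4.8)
The plan is to compute the first variation $\frac{d}{dt}\big|_{t=0}\mathcal{H}^{c_0}_{\alpha,\rho}(\vec\Phi + t\vec\omega)$ term by term for $\vec\omega \in C_c^\infty(D^2,\mathbb{R}^3)$ supported away from the branch points, working throughout in a fixed conformal parametrisation so that $\vec n$, $H$, $\mu_{\vec\Phi}$, etc.\ all have explicit expressions in terms of $\vec\Phi$, and then collect the boundary/divergence terms. Since, away from the branch points, $\vec\Phi$ is a $W^{2,2}$ conformal immersion, one has the standard formulae $2\vec H = e^{-2\lambda}\Delta\vec\Phi$ and $d\mu_{\vec\Phi} = e^{2\lambda}\,d\mathscr{L}^2$, and the integrand of $\mathcal H^{c_0}_{\alpha,\rho}$ becomes, in coordinates,
\begin{equation*}
\big(H_{\vec\Phi} - c_0\big)^2 e^{2\lambda} + \alpha e^{2\lambda} + \rho\, \vec n_{\vec\Phi}\boldsymbol{\cdot}\vec\Phi\, e^{2\lambda}.
\end{equation*}
Expanding $(H-c_0)^2 = H^2 - 2c_0 H + c_0^2$ splits the first variation into the Willmore part $\delta\!\int H^2\,d\mu$, the total mean curvature part $-2c_0\,\delta\!\int H\,d\mu$, the area part $c_0^2\,\delta\!\int 1\,d\mu + \alpha\,\delta\!\int 1\,d\mu$, and the enclosed-volume part $\rho\,\delta\!\int \vec n\boldsymbol{\cdot}\vec\Phi\,d\mu$.

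I would then invoke the known first-variation identities for each piece, quoting them from the literature (Rivi\`ere \cite{MR2430975, MR3524220} for the Willmore term, and the corresponding classical computations for the lower-order terms). The Willmore term contributes exactly $\vec{\mathcal W}$ as in \eqref{Willmore_equation}. For the total mean curvature functional $\int_\Sigma H\,d\mu$, the first variation is $\tfrac12\int (\mathbb I\,\text{-contractions}) $; in divergence form the relevant identity is $\delta\!\int H\,d\mu_{\vec\Phi}[\vec\omega] = \int \vec\omega\boldsymbol{\cdot}\Div\big[\tfrac12\nabla\vec n - \dots\big]$, and one checks that after combining with the volume and area contributions the bracket collapses to $c_0\nabla\vec n + (2c_0 H - c_0^2 - \alpha)\nabla\vec\Phi - \tfrac{\rho}{2}\vec\Phi\times\nabla^\bot\vec\Phi$. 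The area first variation $\delta\!\int 1\,d\mu_{\vec\Phi}[\vec\omega] = -2\int \vec\omega\boldsymbol{\cdot}\vec H\,d\mu_{\vec\Phi} = -\int\vec\omega\boldsymbol\cdot\Div(\nabla\vec\Phi)$ in conformal coordinates, which explains the $\alpha\nabla\vec\Phi$ and $c_0^2\nabla\vec\Phi$ terms. The enclosed-volume functional $\int \vec n\boldsymbol\cdot\vec\Phi\,d\mu$ has first variation $\int\vec\omega\boldsymbol\cdot\vec n\,d\mu$ (the classical fact that the volume gradient is the unit normal), and rewriting $\vec n\,e^{2\lambda} = \partial_{x^1}\vec\Phi\times\partial_{x^2}\vec\Phi = \tfrac12\Div(\vec\Phi\times\nabla^\bot\vec\Phi)$ — this is the key algebraic identity — produces the $-\tfrac{\rho}{2}\vec\Phi\times\nabla^\bot\vec\Phi$ term.

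The main technical point, and the step I expect to require the most care, is the hidden conservation-law structure for the mean-curvature term: writing $\delta\!\int H\,d\mu$ purely as a divergence of a first-order expression in $\vec n$ and $\vec\Phi$. Here I would follow Bernard \cite{MR3518329}, who established precisely these conservation laws for Canham-Helfrich immersions adapting Rivi\`ere's framework; the content is that the naive pointwise Euler-Lagrange expression can be rearranged, using the conformality relations $|\partial_{x^1}\vec\Phi| = |\partial_{x^2}\vec\Phi|$, $\partial_{x^1}\vec\Phi\boldsymbol\cdot\partial_{x^2}\vec\Phi = 0$, $\vec n\boldsymbol\cdot\partial_{x^i}\vec\Phi = 0$, and the Codazzi-type relations satisfied by $d\vec n$, into the divergence of the stated bracket. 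One must also justify that all the manipulations are valid in $\mathcal D'(D^2,\mathbb{R}^3)$: this is fine because $\vec\Phi\in W^{2,2}\cap W^{1,\infty}$ and $\vec n\in W^{1,2}$ locally away from branch points, so $\nabla\vec n$, $H\nabla\vec n$, $\nabla\vec H$ (the latter only as a distribution), etc.\ are all well-defined distributions and the products appearing are $L^1_{loc}$ by Sobolev embedding, Hölder, and the finite-total-curvature hypothesis. Finally, vanishing of \eqref{CHimmersion} for all test $\vec\omega$ yields \eqref{CHEL} after identifying $\vec{\mathcal W}$ via \eqref{Willmore_equation}.
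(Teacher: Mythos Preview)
Your approach is essentially the same as the paper's: decompose $(H-c_0)^2$ into Willmore, total mean curvature, and constant pieces, then add the area and volume terms and collect everything into a single divergence. The citations for the Willmore, area, and volume variations match what the paper uses.

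The one place where your proposal is vague is precisely the step the paper treats as the main computational content of the lemma: the divergence form of $\delta\!\int H\,d\mu$. You write ``follow Bernard \cite{MR3518329}'' for this, but Bernard (as the paper's remark following the lemma points out) records the first variation of $\int H\,d\mu$ in the pointwise form $\int(\vec\omega\boldsymbol\cdot\vec n)\bigl(\tfrac12\mathbb I^i_j\mathbb I^j_i - 2H^2\bigr)\,d\mu$, not in divergence form. The paper instead computes directly from the variations of $g_t^{ij}$, $\vec n_t$, and $\sqrt{\det g_t}$ that
\[
\left.\frac{d}{dt}\right|_{t=0}\int_{D^2} H_t\,d\mu_t \;=\; -\int_{D^2}\vec\omega\boldsymbol\cdot\Div\Bigl[\tfrac12\nabla\vec n + H\nabla\vec\Phi\Bigr]\,d\mathscr L^2,
\]
using the symmetry of the second fundamental form and the identification of the tangential projection $\pi_T(\partial_i\vec n)$ with $\partial_i\vec n$ (since $\vec n\boldsymbol\cdot\partial_i\vec n = 0$). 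This is the formula you should actually produce; once you have it, the combination yielding $c_0\nabla\vec n + (2c_0H - c_0^2 - \alpha)\nabla\vec\Phi - \tfrac{\rho}{2}\vec\Phi\times\nabla^\bot\vec\Phi$ is immediate algebra, as you indicate. Also note that Bernard's conservation laws (the potentials $\vec L,\vec R,S$) are used in the \emph{regularity} theorem, not in this lemma, so invoking them here is a slight misdirection.
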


\begin{proof}
	After composing with a conformal chart away from the branch points, we may assume that $\vec \Phi$ is a map $D^2 \to \mathbb R^3$. Let $\vec \omega \in C^\infty_{c}(D, \mathbb R^3)$	and define $\vec \Phi_t := \vec \Phi + t\vec \omega$ for $t \in \mathbb R$. The conformal factor $\lambda$ is given by $2e^{2\lambda} = |\nabla \vec \Phi|^2$ and the metric coefficients $(g_t)_{ij}$ by $(g_t)_{ij} = \partial_i \vec \Phi_t \boldsymbol{\cdot} \partial_j \vec \Phi_t$. Standard computations (see for instance \cite[(7.8)--(7.10)]{MR3524220}) give 
	\begin{align*} 
		&\Bigl.\frac{d}{dt}\Bigr|_{t = 0} (g_t)^{ij} = - e^{-4\lambda}(\partial_i \vec \omega \boldsymbol{\cdot} \partial_j \vec \Phi + \partial_i \vec \Phi \boldsymbol{\cdot} \partial_j \vec \omega) \\
		&g^{ij} \Bigl(\partial_i \Bigl.\frac{d}{dt}\Bigr|_{t = 0} \vec n_t \boldsymbol{\cdot} \partial_j \vec \Phi \Bigr) = -e^{-2\lambda}\bigl(\partial_1(\partial_1 \vec \omega \boldsymbol{\cdot} \vec n) + \partial_2(\partial_2 \vec \omega \boldsymbol{\cdot} \vec n)\bigr) \\ 
		&\Bigl.\frac{d}{dt}\Bigr|_{t = 0}\sqrt{\det (g_t)_{ij}} = \partial_1 \vec \Phi \boldsymbol{\cdot} \partial_1 \vec \omega + \partial_2 \vec \Phi \boldsymbol{\cdot} \partial_2 \vec \omega. 
	\end{align*}
	Therefore, using
	\begin{align*}
		\Bigl.\frac{d}{dt}\Bigr|_{t = 0} H_t & = - \Bigl.\frac{d}{dt}\Bigr|_{t = 0} \frac{1}{2} (g_t)^{ij} \Bigl(\partial_i \vec n_t \boldsymbol{\cdot} \partial_j \vec \Phi_t \Bigr)\\
		& = -\frac{1}{2} \Bigl(\Bigl.\frac{d}{dt}\Bigr|_{t = 0} (g_t)^{ij} \Bigr) \partial_i \vec n \boldsymbol{\cdot} \partial_j \vec \Phi - g^{ij}\frac{1}{2}\Bigl(\partial_i \Bigl.\frac{d}{dt}\Bigr|_{t = 0} \vec n_t \boldsymbol{\cdot} \partial_j \vec \Phi + \partial_i \vec n \boldsymbol{\cdot} \partial_j \vec \omega \Bigr), 
	\end{align*}
	we obtain
	\begin{align}
		\Bigl.\frac{d}{dt}\Bigr|_{t = 0} \int_{D^2} H_t \, d\mu_t & = \int_{D^2} \Bigl(\Bigl.\frac{d}{dt}\Bigr|_{t = 0} H_t \Bigr) \sqrt{\det g_{ij}} + H \Bigl.\frac{d}{dt}\Bigr|_{t = 0}\sqrt{\det (g_t)_{ij}} \, d\mathscr L^2  \nonumber \\
		& = \int_{D^2} \frac{1}{2} e^{-2\lambda}\sum_{i,j = 1}^2(\partial_i \vec \omega \boldsymbol{\cdot} \partial_j \vec \Phi + \partial_i \vec \Phi \boldsymbol{\cdot} \partial_j \vec \omega) (\partial_{i} \vec n \boldsymbol \cdot \partial_j \vec \Phi) \nonumber \\
		& \qquad \quad + \frac{1}{2}\big(\partial_1(\partial_1 \vec \omega \boldsymbol \cdot \vec n) + \partial_2 (\partial_2 \vec \omega \boldsymbol \cdot \vec n) \bigr) - \frac{1}{2} g^{ij} \partial_i \vec n \boldsymbol{\cdot} \partial_j \vec \omega e^{2\lambda}  \nonumber \\
		& \qquad  \quad + H\bigl( \partial_1 \vec \Phi \boldsymbol \cdot \partial_1 \vec \omega + \partial_2 \vec \Phi \boldsymbol \cdot \partial_2 \vec \omega \bigr)\, d\mathscr L^2.	\label{eq:dintHPrePre}
	\end{align}
	Using that $\vec \omega$ has compact support in $D^2$,
	\begin{gather*}
		g^{ij} \partial_i \vec n \boldsymbol{\cdot} \partial_j \vec \omega e^{2\lambda} = e^{-2\lambda} \sum_{i,j = 1}^2 (\partial_i \vec n \boldsymbol{\cdot} \partial_j \vec \Phi)(\partial_j \vec \Phi \boldsymbol{\cdot} \partial_i \vec \omega),
	\end{gather*}
	and using the symmetry of the second fundamental form, i.e. $\partial_i \vec n \boldsymbol{\cdot} \partial_j \vec \Phi = \partial_j \vec n \boldsymbol{\cdot} \partial_i \vec \Phi$, we compute further 
	\begin{align}
		\Bigl.\frac{d}{dt}\Bigr|_{t = 0} \int_{D^2} H_t \, d\mu_t & = \int_{D^2} -\frac{1}{2} \vec \omega \boldsymbol \cdot \partial_1 \Bigl[e^{-2\lambda}\bigl((\partial_1 \vec n \boldsymbol \cdot \partial_1 \vec \Phi) \partial_1 \vec \Phi + (\partial_1 \vec n \boldsymbol \cdot \partial_2 \vec \Phi) \partial_2 \vec \Phi)\bigr)\Bigr] \nonumber  \\
		& \qquad \qquad - \frac{1}{2} \vec \omega \boldsymbol \cdot \partial_2 \Bigl[ e^{-2\lambda}\bigl((\partial_2 \vec n \boldsymbol \cdot \partial_1 \vec \Phi) \partial_1 \vec \Phi + (\partial_2 \vec n \boldsymbol \cdot \partial_2 \vec \Phi) \partial_2 \vec \Phi)\bigr) \Bigr]\nonumber  \\ 
		& \qquad \qquad - \vec \omega \boldsymbol \cdot \bigl(\partial_1(H \partial_1 \vec \Phi) + \partial_2(H \partial_2 \vec \Phi)\bigr) \, d\mathscr L^2 \nonumber  \\
		& = - \int_{D^2} \vec \omega \boldsymbol \cdot  \partial_1 \left(\frac{1}{2} \pi_T(\partial_1 \vec n) + H \partial_1 \vec \Phi\right) \nonumber  \\
		& \qquad \qquad + \vec \omega \boldsymbol \cdot \partial_2 \left(\frac{1}{2}\pi_T(\partial_2 \vec n) + H \partial_2 \vec \Phi\right) \, d\mathscr L^2 \label{eq:dintHPre}
	\end{align}
	which gives
	\begin{equation} \label{pCHEL:curvature_variation}  
		\Bigl.\frac{d}{dt}\Bigr|_{t = 0} \int_{D^2} H_t \, d\mu_t = -\int_{D^2}  \vec \omega \boldsymbol \cdot \Div\left[\frac{1}{2}\nabla \vec n + H \nabla \vec \Phi\right] \, d\mathscr L^2.
	\end{equation} 

	From \cite[Corollary 7.3]{MR3524220} we know 
	\begin{equation} \label{pCHEL:Willmore_variation}
	\Bigl.\frac{d}{dt}\Bigr|_{t = 0} \int_{D^2} H_t^2 \, d\mu_t = \int_{D^2} \vec \omega \boldsymbol \cdot \Div \frac{1}{2}\left[2\nabla \vec H - 3H\nabla \vec n + \vec H \times \nabla^\bot \vec n\right] \, d\mathscr L^2
	\end{equation}  
	see also \cite{MR2430975}. Moreover (see for instance \cite[Chapter 3.3]{MR3518329}) 
	\begin{equation} \label{pCHEL:area_variation}
		\Bigl.\frac{d}{dt}\Bigr|_{t = 0} \int_{D^2} 1 \, d\mu_t = - \int_{D^2} \vec \omega \boldsymbol \cdot \Div \nabla \vec \Phi \, d \mathscr L^2
	\end{equation}
	and
	\begin{equation} \label{pCHEL:volume}
		\Bigl.\frac{d}{dt}\Bigr|_{t = 0} \int_{D^2}  \vec n_t \boldsymbol \cdot \vec \Phi_t \, d\mu_t = - \int_{D^2} \vec \omega \boldsymbol \cdot \Div\left[\frac{1}{2} \vec \Phi \times \nabla^\bot \vec \Phi\right] \, d\mathscr L^2.
	\end{equation}
	Putting \eqref{pCHEL:curvature_variation} -- \eqref{pCHEL:volume} into \eqref{CHimmersion} yields \eqref{CHEL}.
\end{proof}

\begin{remark}
Usually in the literature  (see for instance \cite[Chapter 3.3]{MR3518329})  one finds the expression of the first variation for $\int H d\mu_{g}$ written as
\begin{equation} \label{eq:FirstVarIntHLit}
		\Bigl.\frac{d}{dt}\Bigr|_{t = 0} \int_{D^2} H_t \, d\mu_t = \int_{D^2} (\vec \omega  \boldsymbol \cdot \vec n) \left( \frac{1}{2} {\mathbb I}^{i}_{j} {\mathbb I}^{j}_{i}-2H^{2} \right) d\mu_{g}.
\end{equation}
It is not hard to check the equivalence of \eqref{eq:FirstVarIntHLit} with \eqref{pCHEL:curvature_variation} proved above. The advantage of the expression \eqref{pCHEL:curvature_variation} is two fold: first it invokes less regularity of the immersion map $\vec \Phi$, second it is already in divergence form. Both advantages will be useful in establishing the regularity of weak Canham-Helfrich immersions: indeed, \eqref{eq:FirstVarIntHLit} would correspond to an $L^{1}$ term in the Euler-Lagrange equation (which is usually a problematic right hand side for elliptic regularity theory) while \eqref{pCHEL:curvature_variation} corresponds to the divergence of an $L^{2}$ term (which is a much better right hand side in elliptic regularity).
\end{remark}

\begin{theorem}[Smoothness of weak Canham-Helfrich immersions] \label{thm:smoothness}
	Suppose $\vec \Phi \in \mathcal F_\Sigma$ is a weak Canham-Helfrich immersion. Then $\vec \Phi$ is a  $C^\infty$ immersion away from the branch points.
\end{theorem}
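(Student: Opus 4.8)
The proof follows the conservation-law strategy for fourth-order geometric equations of Rivi\`ere \cite{MR2430975, MR3524220}, in the form adapted to the Canham--Helfrich functional by Bernard \cite{MR3518329}. Since the assertion and the equation \eqref{CHEL} are local and invariant under conformal reparametrisation, it suffices to work, for an arbitrary $\varepsilon>0$, in a conformal chart $D^2 \to \Sigma\setminus\bigcup_{i}B_\varepsilon(b_i)$ in which $\vec\Phi\colon D^2\to\mathbb R^3$ is a conformal weak immersion with $\vec\Phi\in W^{1,\infty}\cap W^{2,2}$, conformal factor $\lambda$ bounded (so that $e^{\pm\lambda}\in L^\infty$), $\vec n\in W^{1,2}$, and --- by absolute continuity of the finite measure $|d\vec n|^2\,d\mu_{\vec\Phi}$, which in a conformal chart equals $|\nabla\vec n|^2\,d\mathscr L^2$ --- with $\|\nabla\vec n\|_{L^2(D^2)}$ as small as we wish after shrinking the chart. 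By Lemma \ref{lem:Canham-Helfrich_equation} the map $\vec\Phi$ then solves \eqref{CHEL} in $\mathcal D'(D^2,\mathbb R^3)$.

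The next step is to rewrite \eqref{CHEL} in conservative form. The right-hand side of \eqref{CHEL} is the divergence of a field lying in $L^2$ (indeed $c_0\nabla\vec n\in L^2$, $(2c_0 H-c_0^2-\alpha)\nabla\vec\Phi\in L^2$ since $H\in L^2$ and $\nabla\vec\Phi\in L^\infty$, and $\vec\Phi\times\nabla^\bot\vec\Phi\in L^\infty$), so transferring it to the left-hand side of \eqref{CHEL} and recalling \eqref{Willmore_equation} shows that an $\mathbb R^3$-valued field --- the sum of $\nabla\vec H$ and lower-order terms lying in $L^1$ --- is divergence-free on the simply connected domain $D^2$. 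Exactly as in Rivi\`ere's treatment of $\vec{\mathcal W}$, this produces a first potential $\vec L$ (a priori in a Lorentz--Sobolev class, $\nabla\vec L\in L^{2,\infty}_{\mathrm{loc}}$) with $\nabla^\bot\vec L$ equal to the bracketed Willmore flux plus the extra Canham--Helfrich terms; using conformality together with $\vec n\boldsymbol\cdot\nabla\vec\Phi=0$ and $|\nabla\vec\Phi|^2=2e^{2\lambda}$, one then builds, following \cite{MR3518329}, a scalar potential $S$ and an $\mathbb R^3$-valued potential $\vec R$ through expressions for $\nabla S$ and $\nabla\vec R$ in terms of $\vec L$, $\vec\Phi$, $H$ and $\vec n$. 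The result is a closed first-order system for $(\vec\Phi,\vec n,\vec L,S,\vec R)$ whose second-order consequences have the Wente (div--curl) structure $-\Delta S=\nabla\vec n\boldsymbol\cdot\nabla^\bot\vec R+(\text{l.o.t.})$, $-\Delta\vec R=\nabla\vec n\boldsymbol\cdot\nabla^\bot S+\nabla\vec n\boldsymbol\cdot\nabla^\bot\vec R+(\text{l.o.t.})$, and similarly for $\Delta\vec\Phi$ and $\Delta\vec n$; crucially each contribution coming from $c_0,\alpha,\rho$ is either itself a Jacobian $\nabla a\boldsymbol\cdot\nabla^\bot b$ or has strictly better integrability than the leading terms, so the compensated-compactness structure survives.

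Once the system is in this form, the standard $\varepsilon$-regularity argument applies: on a small enough disk the smallness of $\|\nabla\vec n\|_{L^2}$ (with $\lambda$ bounded) turns the map defining the right-hand side into a contraction in the appropriate space, and Wente's inequality together with its refinement in Lorentz spaces (as in \cite{MR3524220}) upgrades the Jacobian right-hand sides to yield $S,\vec R\in W^{1,p}_{\mathrm{loc}}$ and $\vec\Phi\in W^{2,p}_{\mathrm{loc}}$ for some $p>2$; in particular $\vec n\in W^{1,p}_{\mathrm{loc}}\hookrightarrow C^0$, $\lambda\in C^0$, and $H\in L^p_{\mathrm{loc}}$. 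From here a routine bootstrap closes the argument: with $p>2$ the right-hand sides of the system sit in better spaces, Calder\'on--Zygmund estimates give one more derivative, and iterating together with Schauder theory yields $\vec\Phi\in C^k$ for every $k$, hence $\vec\Phi\in C^\infty$ on $D^2$. Since $\varepsilon>0$ and the chart were arbitrary, $\vec\Phi$ is a $C^\infty$ immersion on $\Sigma\setminus\{b_1,\dots,b_N\}$.

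The heart of the matter --- and the only place where this differs from the Willmore regularity theory --- is the middle step: writing \eqref{CHEL} in conservative form and verifying that the three Canham--Helfrich terms, in particular the term $\tfrac\rho2\,\vec\Phi\times\nabla^\bot\vec\Phi$ which couples the potentials to the position map $\vec\Phi$ itself, neither destroy the Jacobian structure of the resulting system nor the smallness needed to run the fixed-point argument. Making rigorous sense of $\nabla\vec H$ --- only $\vec H\in L^2$ is available a priori --- is exactly what the potentials $\vec L,S,\vec R$ accomplish, and one must keep careful track of the (Lorentz-)regularity of each auxiliary field as it propagates through the Wente estimates.
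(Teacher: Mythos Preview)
Your proposal is correct and follows essentially the same route as the paper: reduce to a small conformal disk, invoke Bernard's conservation laws to recast \eqref{CHEL} as a coupled Wente-type system for potentials $(\vec L,S,\vec R)$, use smallness of $\|\nabla\vec n\|_{L^2}$ together with Wente estimates to gain $L^p$ integrability with $p>2$, and then bootstrap. The only noteworthy difference is that the paper makes the ``lower-order terms'' explicit via auxiliary functions $\vec V\in H^1_0$ and $\vec X,Y\in W^{2,2}$ (so that $\nabla\vec X,\nabla Y\in L^q$ for all $q<\infty$), and runs a concrete Morrey-decay iteration rather than the fixed-point/Lorentz-space formulation you sketch; both variants lead to the same $L^p$ gain and the same bootstrap.
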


\begin{proof}
	After composing with a conformal chart of $\Sigma$ away from the branch points onto the unit disk $D^2$, we may assume that $\vec \Phi$ is a map $D^2 \to \mathbb R^3$ without branch points and $\vec \Phi$ satisfies the Canham-Helfrich equation \eqref{CHEL}. It is enough to show that $\vec \Phi \in C^\infty(B_{1/2}(0))$. The proof splits into three parts. \\
	
	\emph{Step 1: Conservation laws.} In view of the Canham-Helfrich equation \eqref{CHEL}, we define \footnote{Note that here, $\vec T$ is not a bubble tree.} $\vec T \in L^2(D^2, (\mathbb R^3)^2)$ by letting
	\begin{equation*}
		\vec T : = c_0 \nabla \vec n 
		+ (2c_0H - c_0^2 - \alpha) \nabla \vec \Phi - \frac{\rho}{2} \vec \Phi \times \nabla^\bot \vec \Phi. 
	\end{equation*}
	Then, $\Div \vec T = - \vec{\mathcal W}$ where $\vec{\mathcal W}$ is as in \eqref{Willmore_equation}. Hence $\vec{\mathcal W} \in H^{-1}(D^2, \mathbb R^3)$ and there exists a solution~$\vec V$ of 
	\begin{equation*}
	\begin{cases}
		\Delta \vec V = - \vec{\mathcal W} \\
		\vec V \in H^1_0(D^2, \mathbb R^3).
	\end{cases} 
	\end{equation*}
	Therefore, we can find 
	\begin{equation} \label{psCH:regularityXY}
		\vec X \in W^{2,2}(D^2,\mathbb R^3), \qquad Y \in W^{2,2}(D^2, \mathbb R)
	\end{equation}
	such that
	\begin{empheq}[left=\empheqlbrace]{align*} 
		\Delta \vec X & = \nabla \vec V \times \nabla \vec \Phi && \text{in } D^2 \\
		\vec X & = 0 && \text{on } \partial D^2  
	\end{empheq}
	and
	\begin{empheq}[left=\empheqlbrace]{align*} 
		\Delta Y & = \nabla \vec V \boldsymbol \cdot \nabla \vec \Phi && \text{in } D^2 \\
		Y & = 0 && \text{on } \partial D^2.  
	\end{empheq}
	After the breakthrough of Rivi\`ere \cite{MR2430975}, Bernard \cite[Chapter 2.2]{MR3518329} showed that by invariance of the Willmore functional under conformal transformation and the weak Poincar\'e Lemma, one can find potentials $\vec L, \vec R \in W^{1, 2}(D^2, \mathbb R^3)$, and $S \in W^{1,2}(D^2, \mathbb R)$ such that
	\begin{equation*}
	\left\{
		\begin{split}
		\nabla^\bot \vec L & = \vec T - \nabla \vec V \\
		\nabla^\bot \vec R & = \vec L \times \nabla^\bot \vec \Phi - \vec H \times \nabla \vec \Phi - \nabla \vec X \\
		\nabla^\bot S & = \langle \vec L, \nabla^\bot \vec \Phi \rangle - \nabla Y.
		\end{split}
	\right.	
	\end{equation*}
	Indeed, from \cite[Chapter 3.3]{MR3518329} we find that $\vec R, S, \vec X, Y$, and $\vec \Phi$ satisfy the following system of conservation laws 
   \begin{empheq}[left=\empheqlbrace]{align} \label{psCH:conservationR}
			\Delta \vec R & = \langle \nabla^\bot \vec n, \nabla S \rangle + \nabla^\bot \vec n \times \nabla \vec R + \Div \Bigl[\langle \vec n, \nabla Y\rangle + \frac{\rho}{4}|\vec \Phi|^2 \nabla \vec \Phi \Bigr] \\ \label{psCH:conservationS}
			\Delta S & =  \nabla^\bot \vec n \boldsymbol \cdot  \nabla \vec R \\
			\label{psCH:conservationY}
			\Delta Y & = |\nabla \vec \Phi|^2\bigl(-(c_0^2 + \alpha) +c_0H + \frac{\rho}{2} \vec \Phi \boldsymbol \cdot \vec n \bigr) \\ \label{psCH:conservationPhi}
			\Delta \vec \Phi & = -\langle \nabla^\bot S, \nabla \vec \Phi \rangle - \nabla^\bot \vec R \times \nabla \vec \Phi + \langle \nabla \vec \Phi, \nabla Y \rangle + \frac{\rho}{4}|\vec \Phi|^2 |\nabla \vec \Phi|^2 \vec n.
	\end{empheq}
	\emph{Step 2: Morrey decrease.} We will show that for some number $\alpha > 0$, there holds 
	\begin{equation} \label{psCH:morrey_decrease}
	\sup_{r<1/4, \, a \in B_{1/2}(0)} r^{-\alpha} \int_{B_r(a)} |\nabla \vec R|^2 + |\nabla S|^2 \, d\mathscr L^2 < \infty.
	\end{equation}
	We let $\varepsilon_0 > 0$ and fix its value later. Choose $0 < r_0 < 1/4$ such that
	\begin{equation} \label{psCH:estimate_curvature}
		\sup_{a \in B_{1/2}(0)} \int_{B_{r_0}(a)} |\nabla \vec n|^2 \, d\mathscr L^2 < \varepsilon_0.
	\end{equation}  
	Let $a$ be any point in $B_{1/2}(0)$. Denote by $\vec R_0$ the solution of 
	\begin{empheq}[left=\empheqlbrace]{align*} 
		\Delta \vec R_0 & = \Div \Bigl[\langle \vec n, \nabla Y\rangle + \frac{\rho}{4}|\vec \Phi|^2 \nabla \vec \Phi \Bigr]  && \text{in } D^2 \\
		\vec R_0 & = 0 && \text{on } \partial D^2.  
	\end{empheq} 
	Then, from \eqref{psCH:regularityXY} we obtain $\vec R_0 \in W^{2,2}(B_1(0), \mathbb R^3)$ and hence $\nabla \vec R_0 \in L^p(B_1(0), (\mathbb R^3)^2)$ for any $1 \leqslant p < \infty$. Therefore, by H\"older's inequality
	\begin{equation} \label{psCH:estimateR_0}
		\int_{B_r(a)}|\nabla \vec R_0|^2 \, d\mathscr L^2 \leqslant r \boldsymbol \alpha(2)^{1/2}\Bigl(\int_{B_1(0)} |\nabla \vec R_0|^4 \, d\mathscr L^2\Bigl)^{1/2} =: r C_1 
	\end{equation}
	whenever $0 < r \leqslant r_0$ where $\boldsymbol \alpha(2)$ is the area of the unit disk. Let $0 < r \leqslant r_0$ and let $\vec \Psi_{\vec R}$ and $\Psi_S$ be the solutions of 
 	\begin{empheq}[left=\empheqlbrace]{align*}
		\Delta \vec \Psi_{\vec R} & = \langle \nabla^\bot \vec n, \nabla S \rangle + \nabla^\bot \vec n \times \nabla \vec R && \text{in } B_{r}(a) \\
		\vec \Psi_{\vec R} & = 0 && \text{on } \partial B_{r}(a)
	\end{empheq}
	and
	\begin{empheq}[left=\empheqlbrace]{align*}
		\Delta \Psi_{S} & = \nabla^\bot \vec n \boldsymbol \cdot  \nabla \vec R && \text{in } B_{r}(a) \\
		\Psi_{S} & = 0 && \text{on } \partial B_{r}(a).
	\end{empheq}
	Then, the maps
	\begin{equation*}
		\vec \nu_{\vec R} : = \vec R - \vec R_0 - \vec \Psi_{\vec R}, \qquad \nu_{S} : = S - \Psi_S 
	\end{equation*}
	are harmonic and satisfy
	\begin{equation*}
	\vec \nu_{\vec R} = \vec R - \vec R_0, \quad \vec \nu_{S} = S \qquad \text{on } \partial B_r(a). 
	\end{equation*}
	Therefore, by monotonicity (see for instance \cite[Lemma 7.10]{MR3524220}), the Dirichlet principle, and \eqref{psCH:estimateR_0}
	\begin{equation} \label{psCH:estimate_harmonic_rest}
	\begin{split}
		\int_{B_{r/3}(a)} |\nabla \vec \nu_{\vec R}|^2 + |\nabla \nu_{S}|^2 \, d\mathscr L^2 & \leqslant \frac{1}{9} \int_{B_{r}(a)} |\nabla (\vec R - \vec R_0)|^2 + |\nabla S|^2 \, d\mathscr L^2 \\
		& \leqslant \frac{2}{9} \int_{B_{r}(a)} |\nabla \vec R |^2 + |\nabla S|^2 \, d\mathscr L^2 + \frac{2}{9}rC_1.
	\end{split}
	\end{equation}
	By Wente's theorem (see for instance \cite[Theorem 3.7]{MR3524220}) and the definition of $r_0$ \eqref{psCH:estimate_curvature} we find
	\begin{equation} \label{psCH:estimate_by_Wente}
	\begin{split}
		& \int_{B_{r}(a)}|\nabla \vec \Psi_{\vec R}|^2 + |\nabla \Psi_S|^2 \, d\mathscr L^2  \\
		& \quad \leqslant C_2\int_{B_{r_0}(a)} |\nabla \vec n|^2 \, d\mathscr L^2 \int_{B_{r}(a)} |\nabla \vec R|^2 + |\nabla S|^2 \, d\mathscr L^2 \\
		& \quad \leqslant C_2\varepsilon_0\int_{B_{r}(a)} |\nabla \vec R|^2 + |\nabla S|^2 \, d\mathscr L^2
	\end{split}
	\end{equation}
	for some constant $0 < C_2 < \infty$ independent of $r, r_0, \vec \Psi_{\vec R}, \Psi_{S}$. Using the inequalities \eqref{psCH:estimateR_0}--\eqref{psCH:estimate_by_Wente} we compute
	\begin{align*}
		& \int_{B_{r/3}(a)} |\nabla \vec R|^2 + |\nabla S|^2 \, d\mathscr L^2 \\
		& \quad \leqslant 3\int_{B_{r/3}(a)}|\nabla \vec \Psi_{\vec R}|^2 + |\nabla \Psi_S|^2 \, d\mathscr L^2 \\
		& \qquad + 3\int_{B_{r/3}(a)} |\nabla \vec \nu_{\vec R}|^2 + |\nabla \nu_{S}|^2 \, d\mathscr L^2 + 3\int_{B_{r/3}(a)}|\nabla \vec R_0|^2 \, d\mathscr L^2 \\
		& \quad \leqslant (3C_2\varepsilon_0 + 6/9) \int_{B_{r}(a)} |\nabla \vec R|^2 + |\nabla S|^2 \, d\mathscr L^2 + \frac{6}{9} r C_1 + r C_1. 
	\end{align*}
	Therefore, taking $\varepsilon_0 = (3C_29)^{-1}$ yields 
	\begin{equation} \label{psCH:induction_start}
		\int_{B_{r/3}(a)} |\nabla \vec R|^2 + |\nabla S|^2 \, d\mathscr L^2 \leqslant \frac{7}{9}\int_{B_{r}(a)} |\nabla \vec R|^2 + |\nabla S|^2 \, d\mathscr L^2 + r 2C_1
	\end{equation}
	for all $0<r\leqslant r_0$.  We next show by induction that 
	\begin{equation} \label{psCH:induction_claim}
	\begin{split}
		&\int_{B_{3^{-n}r_0}(a)} |\nabla \vec R|^2 + |\nabla S|^2 \, d\mathscr L^2 \\
		&\quad \leqslant \Bigl(\frac{7}{9}\Bigr)^n \int_{B_{r_0}(a)} |\nabla \vec R|^2 + |\nabla S|^2 \, d\mathscr L^2 + r_02C_1 \sum_{i = 1}^n 3^{-i+1}\Bigl(\frac{7}{9}\Bigr)^{n-i}
	\end{split}
	\end{equation}
	for all $n \in \mathbb N$. Indeed, letting 
	\begin{equation*}
		A(s) := \int_{B_{s}(a)} |\nabla \vec R|^2 + |\nabla S|^2 \, d\mathscr L^2 \qquad \text{for } 0 < s \leqslant r_0
	\end{equation*}
	we have from \eqref{psCH:induction_start} that $A(r_0/3^1) \leqslant (\frac{7}{9})^1 A(r_0) + r_02C_1 \sum_{i = 1}^1 3^{-i+1}\bigl(\frac{7}{9}\bigr)^{1-i}$. Assuming \eqref{psCH:induction_claim} to be true for some integer $n$, we get from \eqref{psCH:induction_start} that  
	\begin{align*}
		A(r_0/3^{n+1}) & \leqslant \frac{7}{9}A(r_0/3^n) + r_03^{-n}2C_1 \\
		& \leqslant \frac{7}{9} \Bigl[\Bigl(\frac{7}{9}\Bigr)^n A(r_0) + r_02C_1 \sum_{i = 1}^n 3^{-i+1}\Bigl(\frac{7}{9}\Bigr)^{n-i} \Bigr] + r_03^{-n}2C_1 \\
		& \leqslant \Bigl(\frac{7}{9}\Bigr)^{n+1} A(r_0) + r_02C_1 \sum_{i = 1}^{n + 1} 3^{-i+1}\Bigl(\frac{7}{9}\Bigr)^{n + 1 - i}.
	\end{align*}
	Thus, by induction, \eqref{psCH:induction_claim} holds true for all $n \in \mathbb N$.  Since
	\begin{equation*}
		2C_1 \sum_{i = 1}^n 3^{-i+1}\Bigl(\frac{7}{9}\Bigr)^{n-i} \leqslant \Bigl(\frac{7}{9}\Bigr)^{n} 2C_1 3 \sum_{i = 1}^n \frac{9^i}{3^i7^i} \leqslant \Bigl(\frac{7}{9}\Bigr)^{n} 12 C_1,
	\end{equation*}
	it follows that 
	\begin{equation*}
		\int_{B_{3^{-n}r_0}(a)} |\nabla \vec R|^2 + |\nabla S|^2 \, d\mathscr L^2 \leqslant \left(\frac{r_0}{3^n}\right)^\alpha C_0
	\end{equation*}
	for $\alpha = \log_3(9/7)$ and
	\begin{equation*}
		C_0 = r_0^{- \alpha} \Bigl(\int_{B_1(0)} |\nabla \vec R|^2 + |\nabla S|^2 \, d\mathscr L^2 + 12C_1\Bigr)
	\end{equation*}
	which implies \eqref{psCH:morrey_decrease} as $C_0$ and $\alpha$ are independent of $a$.  From \eqref{psCH:conservationR}, \eqref{psCH:conservationS}, the definition of $\vec R_0$, and H\"older's inequality it follows
	\begin{equation*}
		\sup_{r<1/4, \, a \in B_{1/2}(0)} r^{-\alpha/2} \int_{B_r(a)} |\Delta(\vec R - \vec R_0)| + |\Delta S| \, d\mathscr L^2 < \infty
	\end{equation*}
	and hence, by a classical estimate on Riesz potentials \cite{MR0458158},
	\begin{equation*}
		\nabla (\vec R - \vec R_0) \in L_{\mathrm{loc}}^p(B_{1/2}(0), (\mathbb R^3)^2), \qquad \nabla S \in L_{\mathrm{loc}}^p(B_{1/2}(0), \mathbb R^2)
	\end{equation*}
	for some $p>2$. Since $\nabla \vec R_0 \in L^q(B_{1/2}(0), (\mathbb R^3)^2)$ for all $1 \leqslant q < \infty$, we obtain
	\begin{equation} \label{psCH:regularityRS}
		\nabla \vec R \in L_{\mathrm{loc}}^p(B_{1/2}(0), (\mathbb R^3)^2), \qquad \nabla S \in L_{\mathrm{loc}}^p(B_{1/2}(0), \mathbb R^2).
	\end{equation}
	\emph{Step 3: Bootstrapping.} Putting \eqref{psCH:regularityXY} and \eqref{psCH:regularityRS} into \eqref{psCH:conservationPhi}, we infer
	\begin{equation*}
		\nabla \vec n \in L^p_{\mathrm{loc}}(B_{1/2}(0), (\mathbb R^3)^2),
	\end{equation*}
	for some $p>2$ given in the previous step. By H\"older's inequality and \eqref{psCH:conservationR}, \eqref{psCH:conservationS} we first get
	\begin{equation*}
		|\Delta (\vec R -\vec R_{0})| \in L^q_{\mathrm{loc}}(B_{1/2}(0)), \qquad |\Delta S| \in L^q_{\mathrm{loc}}(B_{1/2}(0))
	\end{equation*}
	for  $q:= p/2 > 1$ and then, by Sobolev embedding,
	\begin{equation*}
		\nabla (\vec R-\vec R_{0}) \in L_{\mathrm{loc}}^{q^*}(B_{1/2}(0), (\mathbb R^3)^2), \qquad \nabla S \in L_{\mathrm{loc}}^{q^*}(B_{1/2}(0), \mathbb R^2)
	\end{equation*}
	where $q^* := 2q/(2 - q) = 2p/(4 - p)$ satisfies  $q^* > 2q = p$ as $p > 2$. 
	\\Since $\nabla \vec R_{0} \in L_{\mathrm{loc}}^{q}(B_{1/2}(0)$ for all $1 \leqslant q < \infty$, we infer 
	\begin{equation*}
		\nabla \vec R \in L_{\mathrm{loc}}^{q^*}(B_{1/2}(0), (\mathbb R^3)^2), \qquad \nabla S \in L_{\mathrm{loc}}^{q^*}(B_{1/2}(0), \mathbb R^2).
	\end{equation*}
	 Notice that $q^*$ as above induces a recursively defined sequence of real numbers. Given a starting point $q_0 > 1$, this sequence is unbounded as $q^* > 2q$. Hence, we can repeat this procedure to obtain 
	 	\begin{equation*}
		\nabla  \vec R \in L^q_{\mathrm{loc}}(B_{1/2}(0), (\mathbb R^3)^2), \quad \nabla S \in L^q_{\mathrm{loc}}(B_{1/2}(0), \mathbb R^2) \qquad \text{for all } 1 \leqslant q <\infty.
	\end{equation*}
	Therefore, from the system of conservation laws \eqref{psCH:conservationR}--\eqref{psCH:conservationPhi} we get step by step for all $1 \leqslant q < \infty$
	\begin{gather*}
		\vec \Phi \in W^{2,q}_{\mathrm{loc}}(B_{1/2}(0), \mathbb R^3), \qquad \nabla \vec n \in L^q_{\mathrm{loc}}(B_{1/2}(0), (\mathbb R^3)^2), \qquad Y \in W^{2,q}_{\mathrm{loc}}(B_{1/2}(0), \mathbb R),  \\
		\vec R \in W^{2,q}_{\mathrm{loc}}(B_{1/2}(0), \mathbb R^3), \qquad S \in W^{2,q}_{\mathrm{loc}}(B_{1/2}(0), \mathbb R).
	\end{gather*}
	Iteration gives
	\begin{equation*}
		\vec \Phi \in W^{k,p}_{\mathrm{loc}}(B_{1/2}(0), \mathbb R^3) \qquad \text{for all } k \in \mathbb N, \, 1 \leqslant p < \infty
	\end{equation*}
	and hence,
	\begin{equation*}
		\vec \Phi \in C^\infty(B_{1/2}(0))
	\end{equation*}
	which finishes the proof. 
\end{proof}

Let $A_0, V_0 > 0$ satisfy the isoperimetric inequality: $A_0^3 \geqslant 36 \pi V_0^2$ and let 
\begin{equation*}
\mathcal{F}_{A_0, V_0} := \mathcal F \cap \{\vec \Phi: \area \vec \Phi = A_0, \, \vol \vec \Phi = V_0\}
\end{equation*}
be the family of weak (possibly branched) immersions with area $A_{0}$ and enclosed volume $V_{0}$. Using the scaling invariance of the Willmore energy (which implies the equivalence between a scale invariant isoperimetric-ratio constraint versus  a double constraint on enclosed volume and area), from \cite[Lemma 2.1]{MR2928137} we get that
$$ \inf_{\vec{\Phi} \in \mathcal {F}_{A_{0},V_{0}} } \int_{\mathbb S^2} H ^{2} \, d\mu_{\vec{\Phi}} < 8 \pi. $$
Define
\begin{equation}\label{eq:defeps0}
	\varepsilon_{\ref{lem:embeddedness}}(A_{0},V_{0}) : =   \dfrac{ \sqrt{8 \pi} - \sqrt{\inf_{\vec{\Phi} \in \mathcal {F}_{A_{0},V_{0}} } \int_{\mathbb S^2} H ^{2} \, d\mu_{\vec{\Phi}}}}{2 \sqrt{A_{0}}} \in \left(0, (\sqrt{2}-1) \sqrt{\pi}/\sqrt{A_0} \right],
\end{equation}
where the upper bound is given by the Willmore Theorem  \cite[Theorem 7.2.2]{MR1261641}, see also \eqref{pre:lower_bound_Willmore}. Notice that  $\varepsilon_{\ref{lem:embeddedness}}(A_{0},V_{0})$ depends continuously on $A_{0}$ and $V_{0}$. Indeed, from \cite[Theorem 1.1]{MR2928137}, $\inf_{\vec{\Phi} \in \mathcal {F}_{A_{0},V_{0}} } \int_{\mathbb S^2} H ^{2} \, d\mu_{\vec{\Phi}}$ is a continuous function of the isoperimetric ratio.

\begin{lemma} \label{lem:embeddedness}
	Let $A_{0} > 0, V_{0} >0$ satisfy the isoperimetric inequality: $A_{0}^{3} \geqslant 36 \pi V_{0}^{2}$ and let $\varepsilon_{\ref{lem:embeddedness}}=\varepsilon_{\ref{lem:embeddedness}}(A_{0},V_{0})$ be defined as in \eqref{eq:defeps0}.
	Then, for any $c_{0} \in (-\varepsilon_{\ref{lem:embeddedness}}, \varepsilon_{\ref{lem:embeddedness}})$, the following holds.
	\\Any minimizing sequence $\vec{\Phi}_{k}$ of $\inf_{\vec{\Phi} \in \mathcal {F}_{A_{0},V_{0}} } \int_{\mathbb S^2} ( H - c_{0} )^{2} \, d\mu_{\vec{\Phi}}$  satisfies 
	\begin{equation}
	\limsup_{k \rightarrow + \infty} \int_{\mathbb S^2} H_{\vec \Phi_k}^{2} \, d\mu_{\vec{\Phi}_{k}} < 8 \pi. \nonumber
	\end{equation}
\end{lemma}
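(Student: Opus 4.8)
The plan is to compare the Helfrich energy with the Willmore energy by purely algebraic manipulations and then invoke the strict bound $m:=\inf_{\vec\Phi\in\mathcal F_{A_0,V_0}}\int_{\mathbb S^2}H_{\vec\Phi}^2\,d\mu_{\vec\Phi}<8\pi$ recorded just before the lemma (a consequence of \cite[Lemma 2.1]{MR2928137}). The starting point is the expansion $\int_{\mathbb S^2}(H-c_0)^2\,d\mu=\int_{\mathbb S^2}H^2\,d\mu-2c_0\int_{\mathbb S^2}H\,d\mu+c_0^2A_0$ together with the Cauchy--Schwarz bound $\bigl|\int_{\mathbb S^2}H\,d\mu\bigr|\leq\sqrt{A_0}\,\bigl(\int_{\mathbb S^2}H^2\,d\mu\bigr)^{1/2}$, which is legitimate since $H\in L^2(\mathbb S^2)$ by \eqref{pre:L2curvature} and $\mu_{\vec\Phi}(\mathbb S^2)=A_0$. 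Writing $w_{\vec\Phi}:=\int_{\mathbb S^2}H_{\vec\Phi}^2\,d\mu_{\vec\Phi}$, these two facts combine to give, for every $\vec\Phi\in\mathcal F_{A_0,V_0}$, the two-sided estimate
\begin{equation*}
\bigl(\sqrt{w_{\vec\Phi}}-|c_0|\sqrt{A_0}\bigr)^2\;\leq\;\int_{\mathbb S^2}(H_{\vec\Phi}-c_0)^2\,d\mu_{\vec\Phi}\;\leq\;\bigl(\sqrt{w_{\vec\Phi}}+|c_0|\sqrt{A_0}\bigr)^2 .
\end{equation*}

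First I would bound the infimum of the Helfrich energy from above. Choosing a Willmore-minimising sequence $\vec\Psi_j\in\mathcal F_{A_0,V_0}$ with $w_{\vec\Psi_j}\to m$ and using the right-hand inequality above, letting $j\to\infty$ yields
\begin{equation*}
\inf_{\vec\Phi\in\mathcal F_{A_0,V_0}}\int_{\mathbb S^2}(H_{\vec\Phi}-c_0)^2\,d\mu_{\vec\Phi}\;\leq\;\bigl(\sqrt{m}+|c_0|\sqrt{A_0}\bigr)^2 .
\end{equation*}
Then, given a minimising sequence $\vec\Phi_k$ for the Helfrich energy, the left-hand inequality applied to $\vec\Phi_k$ shows that, for every $\delta>0$ and all $k$ large, $\bigl(\sqrt{w_{\vec\Phi_k}}-|c_0|\sqrt{A_0}\bigr)^2\leq\bigl(\sqrt m+|c_0|\sqrt{A_0}\bigr)^2+\delta$, hence $\sqrt{w_{\vec\Phi_k}}\leq|c_0|\sqrt{A_0}+\bigl[(\sqrt m+|c_0|\sqrt{A_0})^2+\delta\bigr]^{1/2}$; letting $\delta\downarrow 0$ gives $\limsup_{k\to\infty}\sqrt{w_{\vec\Phi_k}}\leq\sqrt m+2|c_0|\sqrt{A_0}$.

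To conclude, I would simply read off the definition \eqref{eq:defeps0}: the hypothesis $|c_0|<\varepsilon_{\ref{lem:embeddedness}}(A_0,V_0)$ is equivalent to $2|c_0|\sqrt{A_0}<\sqrt{8\pi}-\sqrt m$, so $\limsup_{k\to\infty}\sqrt{w_{\vec\Phi_k}}<\sqrt{8\pi}$, and squaring (harmless, as all quantities are non-negative) yields $\limsup_{k\to\infty}\int_{\mathbb S^2}H_{\vec\Phi_k}^2\,d\mu_{\vec\Phi_k}<8\pi$, which is the assertion.

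There is no genuine analytic obstacle here: the entire argument rests on the two elementary inequalities above and on the already-quoted strict bound $m<8\pi$. The only points requiring a word of care are that $\mathcal F_{A_0,V_0}$ is non-empty and $m$ finite — both guaranteed by the isoperimetric condition $A_0^3\geq 36\pi V_0^2$ together with $m\geq 4\pi$ from \eqref{pre:lower_bound_Willmore} — and the bookkeeping with $\limsup$ when passing from $w_{\vec\Phi_k}$ to $\sqrt{w_{\vec\Phi_k}}$ and back, which causes no trouble since everything in sight is non-negative.
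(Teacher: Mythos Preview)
Your proof is correct and follows essentially the same route as the paper's: both derive the two-sided estimate $(\sqrt{w_{\vec\Phi}}-|c_0|\sqrt{A_0})^2\le\int(H-c_0)^2\,d\mu\le(\sqrt{w_{\vec\Phi}}+|c_0|\sqrt{A_0})^2$ from Cauchy--Schwarz, use it to compare the Helfrich and Willmore infima, and then read off the strict bound from the definition of $\varepsilon_{\ref{lem:embeddedness}}$. The only cosmetic difference is that the paper passes through the equivalent form $\bigl|\sqrt{\int H^2}-\sqrt{\int(H-c_0)^2}\bigr|\le|c_0|\sqrt{A_0}$ and uses a fixed tolerance $(\varepsilon_{\ref{lem:embeddedness}}-|c_0|)\sqrt{A_0}$ in place of your $\delta\downarrow 0$.
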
 

\begin{proof}
	From the Cauchy-Schwartz inequality, we have $\vert \int_{\mathbb S^2} H \, d\mu_{\vec{\Phi}} \vert \leqslant \sqrt{\int_{\mathbb S^2} H^{2} \, d\mu_{\vec{\Phi}}} \sqrt{\textrm{Area}({\vec{\Phi}})}$.  Thus:
	\begin{equation*}
	\left( \sqrt{\int_{\mathbb S^2} H^{2} \, d\mu_{\vec{\Phi}}} - \vert c_{0} \vert \sqrt{ \textrm{Area}({\vec{\Phi}})  } \right)^{2} \leqslant \int_{\mathbb S^2} (H-c_{0})^{2} \, d\mu_{\vec{\Phi}} \leqslant \left( \sqrt{\int_{\mathbb S^2} H^{2} \, d\mu_{\vec{\Phi}}} + \vert c_{0} \vert \sqrt{ \textrm{Area}({\vec{\Phi}})} \right)^{2}
	\end{equation*}
	which yields
	\begin{equation}\label{eq:HelWil}
	\left \vert \sqrt{\int_{\mathbb S^2} H^{2} \, d\mu_{\vec{\Phi}}}-  \sqrt{\int_{\mathbb S^2} (H-c_{0})^{2} \, d\mu_{\vec{\Phi}}} \right\vert \leqslant \vert c_{0} \vert \sqrt{ \textrm{Area}({\vec{\Phi}})}. 
	\end{equation}
	In particular, we deduce that
	\begin{equation}\label{eq:infWHH0}
	\left \vert \inf_{\vec{\Phi} \in \mathcal{F}_{A_{0},V_{0}}} \sqrt{ \int_{\mathbb S^2} H^{2} \, d\mu_{\vec{\Phi}}} -  \inf_{\vec{\Phi} \in \mathcal{F}_{A_{0},V_{0}}} \sqrt{\int_{\mathbb S^2} (H-c_{0})^{2} \, d\mu_{\vec{\Phi}}} \right \vert
	\leqslant \vert c_{0} \vert \sqrt{A_{0}}. 
	\end{equation}
	Let $c_{0}\in (- \varepsilon_{\ref{lem:embeddedness}}(A_{0},V_{0}), \varepsilon_{\ref{lem:embeddedness}}(A_{0},V_{0}))$ and let $\vec{\Phi}_{k}$ be a minimizing sequence  of $\inf_{\vec{\Phi} \in \mathcal {F}_{A_{0},V_{0}} } \int_{\mathbb S^2} ( H - c_{0} )^{2} \, d\mu_{\vec{\Phi}}$. 
	\\For $k$ large enough it holds
	\begin{equation}\label{eq:PhikInf}
	\sqrt{\int_{\mathbb S^2} ( H_k - c_{0} )^{2} \, d\mu_{\vec{\Phi}_{k}}} \leqslant \sqrt{\inf_{\vec{\Phi} \in \mathcal{F}_{A_{0},V_{0}}} \int_{\mathbb S^2} (H-c_{0})^{2} \, d\mu_{\vec{\Phi}}}+ (\varepsilon_{\ref{lem:embeddedness}}(A_{0},V_{0})- \vert c_{0} \vert) \sqrt{A_{0}} .
	\end{equation}
	Combining \eqref{eq:HelWil}, \eqref{eq:PhikInf}, and \eqref{eq:infWHH0}, we get
	\begin{align}
	\displaystyle{\sqrt{\int_{\mathbb S^2} H_k^{2} \, d\mu_{\vec{\Phi}_{k}}}} & \leqslant \sqrt{\int_{\mathbb S^2} (H_k- c_{0})^{2} \, d\mu_{\vec{\Phi}_{k}}}  +  \vert c_{0} \vert \sqrt{A_{0}} \nonumber \\
	& \leqslant \sqrt{\inf_{\vec{\Phi} \in \mathcal{F}_{A_{0},V_{0}}} \int_{\mathbb S^2} (H-c_{0})^{2} \, d\mu_{\vec{\Phi}}} + (\varepsilon_{\ref{lem:embeddedness}}(A_{0},V_{0})- \vert c_{0} \vert) \sqrt{A_{0}} +   \vert c_{0} \vert \sqrt{A_{0}}  \nonumber \\
	& <  \sqrt{\inf_{\vec{\Phi} \in \mathcal{F}_{A_{0},V_{0}}} \int_{\vec{\Phi}_{k}} H^{2} \, d\mu_{\vec{\Phi}}} + 2\varepsilon_{\ref{lem:embeddedness}}(A_{0},V_{0}) \sqrt{A_{0}} =  \sqrt{8 \pi}, \nonumber
	\end{align}
	where in the last identity we plugged in the definition of $\varepsilon_{\ref{lem:embeddedness}}(A_{0},V_{0}) $ as in \eqref{eq:defeps0}. 
\end{proof}

{\footnotesize
\bibliographystyle{alpha}
\bibliography{mybib}}

%
%
%
%
\end{document}